\newtheorem{thm}{Theorem}
\newtheorem{prop}[thm]{Proposition}
\newtheorem{cor}[thm]{Corollary}
\theoremstyle{definition}
\newtheorem*{defi}{Definition}
\newtheorem*{rem}{Remark}
\newtheorem*{conj}{Conjecture}
\begin{document}

\title{Game of Pure Chance with Restricted Boundary}
\author{Ho-Hon Leung \\
Department of Mathematical Sciences \\
United Arab Emirates University \\
Al Ain, 15551\\
United Arab Emirates\\
{\tt hohon.leung@uaeu.ac.ae}\\
\ \\ Thotsaporn ``Aek'' Thanatipanonda\\
Science Division\\
Mahidol University International College\\
Nakornpathom, Thailand \\
{\tt thotsaporn@gmail.com}}

\date{January 14, 2020}

\maketitle
\thispagestyle{empty}

\begin{abstract}
We consider various probabilistic games with piles for one player or two players. In each round of the game, a player randomly chooses to add $a$ or $b$ chips to his pile under the condition that $a$ and $b$ are not necessarily positive. If a player has a negative number of chips after making his play, then the number of chips he collects will stay at $0$ and the game will continue. All the games we considered satisfy these rules. The game ends when one collects $n$ chips for the first time. Each player is allowed to start with $s$ chips where $s\geq 0$. We consider various cases of $(a,b)$ including the pairs $(1,-1)$ and $(2,-1)$ in particular. We investigate the probability generating functions of the number of turns required to end the games. We derive interesting recurrence relations for the sequences of such functions in $n$ and write these generating functions as rational functions. As an application, we derive other statistics for the games which include the average number of turns required to end the game and other higher moments. 
\end{abstract}

This article is accompanied by Maple package: {\tt Pile2.txt}. 
They are available on the website\\

{\tt http://www.thotsaporn.com/GPCBoundary.html} .

\section{Introduction} \label{section1}
A {\it game of pure chance} is a probabilistic game in which no skill is involved. Each participant takes turns to play the game randomly. He is not required to spend hours of practice to improve his skills in the game. Hence, a beginner in the game is not in a disadvantageous position when he plays against a professional player. These games are relaxing and could be exciting sometimes. We always play such games in our lives. Lottery, Roulette, ``Snakes and Ladders'' and Dreidel are some examples of games of pure chance.

In this paper, we analyze games of pure chance \textbf{with restricted boundary}, namely \textbf{pile games with boundary}. They can be viewed as solitaire games or as competition among multiple players. Mathematically, we describe the game as follows: Given a fixed number $n$ and a {\it set of choices} $R=\{r_1,r_2,...,r_m\}$ such that each element $r_i\in R$ is an integer  and is assigned a transition probability, a player in the game randomly chooses an element $r_i\in R$ in each round; and he takes $r_i$ chips accordingly. The game ends when one of the players collects $n$ chips or more. We note that the integers $r_i\in R$ are not necessarily positive. If a player has a negative number of chips after making his play, then the number of chips he collects will stay at $0$ and the game will continue. As an example about the set of choices $R$, in Roulette, we have $R=\{-1,35\}$ such that $p(-1)=36/37$ and $p(35)=1/37$. 

For a pile game with boundary, the only absorbing state is reached when a player collects $n$ chips or more. We compare this game to the famous \textit{Gambler's Ruin} problem in which a player is allowed to have a negative number of chips; and the game ends when a player collects some positive or negative number of chips. The pile games with boundary is analogous to the real world situation when someone goes bankrupt. He does not have to pay the debt and is allowed to start fresh after the end of the bankruptcy period.

The paper is organized as follows: In Section \ref{One} and Section \ref{Two}, we study the pile games with boundary for the set of choices $R=\{1,-1\}$ in the single-player scenario and the two-player scenario respectively. We derive properties for the generating functions of the probabilities to end the game at different turns/rounds. Along the way we compute other statistics, say, the average number of turns required to end the game and higher moments of such random variables. We also discover interesting patterns obtained from sequences of numbers that are naturally associated with the games. In Section \ref{section4}, we do a similar study on the pile game with boundary for the general case of $R=\{1,-u\}$ where $u\geq 1$. In Section \ref{section5}, we do the same investigation about the pile game with boundary for the set of choices $R=\{2,-1\}$. We single out this case because the patterns involved in the generating functions get quite complicated and are rather different from the cases considered in previous sections. In the last section, we make the final conclusion on the main results we obtained in this paper.

\subsection{Single-player scenario} \label{section1.1}

\begin{defi}
Let $a_n(k)$ be the number of ways to end the game, i.e., the player collects $n$ chips or more for the first time at the $k^{\text{th}}$ turn. 
\end{defi}

\begin{defi}
Let $b_n(k,s)$ be the number of ways to end the game, i.e., the player collects $n$ chips or more for the first time at the $k^{\text{th}}$ turn while starting with $s$ chips.
\end{defi}

We note that $b_n(k,0)=a_n(k)$. It is clear that the quantities 
$b_{n}(k,s)$ satisfy a system of linear equations. 
For example, if we have a set of choices $R=\{1,-1\}$, then for $n > 0$ and $k\geq 1$, we have
\begin{align} 
b_n(k,0)  &= b_n(k-1,0)+b_n(k-1,1) ,  \label{b1} \\
b_n(k,s)  &= b_n(k-1,s-1)+b_n(k-1,s+1) ,   \;\  1 \leq s <n, \label{b2}  \\
b_n(k,n) &= 0, \label{b3}
\end{align}
while $b_n(0,n)=1$ and $b_n(0,s)=0, 0\leq s<n$. 

\begin{defi}
Let $B_n(k,s)$ be the probability
that the game ends at the $k^{\text{th}}$ turn while starting with $s$ chips.
The probability generating function $G_{n,s}(x)$ is defined by
\[G_{n,s}(x) := \sum_{k=0}^{\infty} B_{n}(k,s)x^k.\]
\end{defi}

For example, if we have a set of choices $R=\{1,-1\}$ such that the probability to add one chip to the pile is $p$ and the probability to remove one chip from the pile is $q=1-p$, then for $n > 0$, we have
\begin{align} 
G_{n,0}(x)  &= pxG_{n,1}(x) +qxG_{n,0}(x),  \label{Sys1:a} \\
G_{n,s}(x)  &= pxG_{n,s+1}(x) +qxG_{n,s-1}(x),   \;\  1\leq s <n, \label{Sys1:b} \\
G_{n,n}(x) &= 1. \label{Sys1:c}
\end{align}

It is important to note that, by Cramer's rule, 
for any fixed number $n$, the function $G_{n,s}(x)$
is a rational function in $x$. 
Also, the denominators of $G_{n,s}(x)$
are the same for all $s$ and have degrees of at most $n$.

We state our observations for $B_n(k,s)$ and $G_{n,s}(x)$
as a theorem below in Theorem \ref{Concept1}. This is a \textit{conceptual theory} from 
Theorem 1 of the new bible of games of pure chance \cite{TZ}. Before stating the theorem, we give the definition of the {\it C-finite sequence} to readers who are unfamiliar with this subject. The textbook written by Kauers and Paule \cite{KP} and the paper written by the second author and Zeilberger \cite{Z1} serve as excellent references to readers who want to learn more about this topic. 

\begin{defi}
A sequence $a(n)$ is called a C-finite sequence if it fits the $C$-finite ansatz, i.e., the terms 
$a(n)$ satisfy a linear recurrence relation with constant coefficients as follows:
\begin{align}  
a(n)+c_1a(n-1)+...+c_La(n-L)&=0  \label{C} 
\end{align}
for some positive integer $L$ and constants $c_1,c_2,\dots,c_L$.
Equivalently, let $f(x)$ be the generating function for the sequence $a(n)$, i.e.,
\begin{align} f(x) &= \sum_{n=0}^{\infty} a(n)x^n. \nonumber\end{align}
Then, the function $f(x)$ is a rational function in $x$, i.e., the function $f(x)$ can be written as $P(x)/Q(x)$.
\end{defi}

It is clear from the definition of generating functions (see also \cite[Theorem 4.3]{KP})
that by $\eqref{C}$, the denominator $Q(x)$ of the rational function $f(x)$ can be written as follows:
\begin{align} 
Q(x) &= 1+c_1x+\dots+c_Lx^L. \label{Q}  
\end{align}

These C-finite sequences are the main objects of study in this paper. Now, we state our observations for $B_n(k,s)$ and $G_{n,s}(x)$ as follows:
  
\begin{thm}  \label{Concept1}
For any $n$, the probability generating function $G_{n,s}(x),
0 \leq s \leq n,$
are rational functions with the same denominator 
of degree at most $n$. Equivalently, the numbers $B_n(k,s)$, considered as a sequence in $k$ for fixed $n$ and $s$,  
is a C-finite sequence; and the recurrence relations are the same
for each value of $s$ (with different initial values, of course).
\end{thm}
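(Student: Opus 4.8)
The plan is to extract everything from the defining linear system for the probability generating functions, exactly as displayed in \eqref{Sys1:a}--\eqref{Sys1:c} for the running example $R=\{1,-1\}$ (and formed analogously for any finite set of choices $R$), following the remark made just before the theorem. First I would rewrite that system in matrix form $M(x)\,\mathbf{G}_n(x)=\mathbf{b}(x)$, where $\mathbf{G}_n(x)=(G_{n,0}(x),\dots,G_{n,n-1}(x))^{T}$ collects only the genuinely unknown generating functions, the boundary values $G_{n,t}(x)=1$ for $t\geq n$ having been substituted into the right-hand side; here $M(x)$ is an $n\times n$ matrix whose entries are polynomials in $x$ of degree at most $1$, and $\mathbf{b}(x)$ is a column vector of such polynomials. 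The point is that every scalar equation has the shape $G_{n,s}(x)=x\sum_{r\in R}p(r)\,G_{n,\,g(s+r)}(x)$, where $g$ encodes the ``stays at $0$'' and ``absorbed when $\geq n$'' rules, so each coefficient of an unknown is either $1$ or a constant times $x$.

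Next I would invoke Cramer's rule. Since $M(0)=I_n$ (at $x=0$ every off-diagonal entry vanishes and every diagonal entry equals $1$), we get $\det M(0)=1$, hence $\det M(x)\not\equiv 0$ and the system has a unique solution over the field of rational functions. Cramer's rule then gives $G_{n,s}(x)=\det M_s(x)/\det M(x)$, where $M_s(x)$ is $M(x)$ with its $s$-th column replaced by $\mathbf{b}(x)$. The Leibniz expansion of $\det M(x)$, together with the fact that $M(x)$ is $n\times n$ with entries of degree $\leq 1$, shows $\deg\det M(x)\leq n$; and $\det M(x)$ is plainly independent of $s$. This already establishes the first assertion: each $G_{n,s}(x)$ is rational with the common denominator $Q(x):=\det M(x)$, of degree at most $n$ and satisfying $Q(0)=1$.

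For the ``equivalently'' clause I would simply apply the standard dictionary between rational generating functions and C-finite sequences recalled right before the theorem (see \cite[Theorem~4.3]{KP} and \eqref{Q}): writing $Q(x)=1+c_1x+\dots+c_Lx^{L}$ with $L=\deg Q\leq n$, and comparing coefficients of $x^{k}$ on the two sides of $Q(x)\,G_{n,s}(x)=P_s(x)$ (with $P_s:=\det M_s$ a polynomial) for all $k$ beyond $\deg P_s$, yields $B_n(k,s)+c_1B_n(k-1,s)+\dots+c_LB_n(k-L,s)=0$. Thus for each fixed $n$ and $s$ the sequence $(B_n(k,s))_k$ fits the C-finite ansatz, and since the recurrence coefficients $c_1,\dots,c_L$ are read off from the common denominator $Q(x)$, they are the same for every $s$; only the finitely many initial values, governed by the numerators $P_s$, change.

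The only part calling for genuine care — the closest thing to an obstacle in an otherwise soft, conceptual statement — is the bookkeeping that pins the degree bound at $n$ rather than the naive $n+1$: one must build the system on the $n$ transient states $0,1,\dots,n-1$ alone, folding the forced values $G_{n,t}(x)=1$ for $t\geq n$ into the inhomogeneous vector $\mathbf{b}(x)$, so that the coefficient matrix is honestly $n\times n$. I would note that this same reduction, with $n$ unchanged, applies verbatim to every set of choices $R$ studied later in the paper, which is exactly why Theorem~\ref{Concept1} can be used as a blanket conceptual fact throughout.
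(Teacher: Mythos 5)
Your proposal is correct and follows essentially the same route the paper takes: the paper justifies Theorem~\ref{Concept1} by the remark immediately preceding it, namely applying Cramer's rule to the linear system \eqref{Sys1:a}--\eqref{Sys1:c} (with the boundary values substituted) to get rational functions with a common denominator of degree at most $n$, and then invoking the standard dictionary \eqref{Q} between rational generating functions and C-finite sequences. You merely supply the details (the $n\times n$ reduction, $M(0)=I_n$, the Leibniz degree bound) that the paper leaves implicit.
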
 

We will derive the statistics of $B_{n}(k,s)$ 
from its generating function once we have one.
For example, the formula for the average number of turns is
\[E[X]=x\dfrac{d\, G_{n,s}(x)}{dx} \bigg\rvert_{x=1};\]while the formula for the $r^{th}$ moment is 
\[\left(x\dfrac{d\,}{dx}\right)^r G_{n,s}(x) \bigg\rvert_{x=1}.\]

\subsection{Two-player Scenario} \label{section1.2}
For the two-player scenario, both players 
compete to collect $n$ chips first. The game ends when one of the players collect $n$ chips for the first time.

\begin{defi}
Let $w_{n}(k,s_1,s_2)$ be the probability that the first player wins the game (by collecting $n$ chips or more first) 
at the $k^{\text{th}}$ turn when the first player and the second player start with $s_1$ and $s_2$ 
chips respectively. Similarly, we define
the losing probability $l_{n}(k, s_1,s_2)$
to be the probability that the first player loses the game (the second player
being the first to collect $n$ chips or more) at the $k^{\text{th}}$ turn. 

The winning and losing probability generating functions are defined by
\[W_{n,s_1,s_2}(x) := \sum_{k=0}^{\infty} w_{n}(k,s_1,s_2)x^k \quad \text{and} \quad L_{n,s_1,s_2}(x) := \sum_{k=0}^{\infty} l_{n}(k,s_1,s_2)x^k\]respectively.
\end{defi}

\begin{defi}
Let $t_{n}(k,s_1,s_2)$ be the probability
that the game ends (one of the players collects $n$ chips or more) 
at the $k^{\text{th}}$ round (one round consists of two turns,
one from each player) 
when the first player and the second player start with $s_1$ and $s_2$ 
chips respectively. 
\end{defi}

The generating function of the terms $t_{n}(k,s_1,s_2)$ is defined by
\[T_{n,s_1,s_2}(x) := \sum_{k=0}^{\infty} t_{n}(k,s_1,s_2)x^k.\]
It is clear that the relation between the functions $W_{n,s_1,s_2}(x)$, $L_{n,s_1,s_2}(x)$ and $T_{n,s_1,s_2}(x)$ is
\begin{equation} \label{MakeT}
  T_{n,s_1,s_2}(x) =\dfrac{1}{x}\cdot W_{n,s_1,s_2}(x^2)+L_{n,s_1,s_2}(x^2).   
  \end{equation}

For example, if we have the set of choices $R=\{1,-1\}$ such that the probability to add one chip is $p$ and the probability to remove one chip is $q=1-p$, then for $n > 0$, we have
\begin{align} \label{Sys2}
T_{n,0,s_2}(x)  &= pxT_{n,s_2,1}(x) +qxT_{n,s_2,0}(x),   \;\ 0 \leq s_2 < n, \\
T_{n,s_1,s_2}(x)  &= pxT_{n,s_2,s_1+1}(x) +qxT_{n,s_2,s_1-1}(x),   
\;\  1 \leq s_1 < n, \;\   0 \leq s_2 < n,\\
T_{n,n,s_2}(x) &= 1,    \;\   0 \leq s_2 < n, \\
T_{n,s_1,n}(x) &= 1,    \;\   0 \leq s_1 < n.
\end{align}

We make an observation similar to those in Section \ref{section1.1} here. By Cramer's rule,
the function $T_{n,s_1, s_2}(x)$ must be a rational function. 
Similar systems of equations apply to $W_{n,s_1, s_2}(x)$  and  $L_{n,s_1, s_2}(x)$ as well.
For example, for $n > 0$, we have
\begin{align*}  \label{SysW} 
 W_{n,s_1,s_2}(x)  &= x(p^2W_{n,s_1+1,s_2+1}(x) +pqW_{n,s_1+1,(s_2-1)^+}(x) \\
&+qpW_{n,(s_1-1)^+,s_2+1}(x) +q^2W_{n,(s_1-1)^+,(s_2-1)^+}(x),   
\;\  0 \leq s_1,s_2 < n, \\
W_{n,n,s_2}(x) &= 1,    \;\   0 \leq s_2 \leq n, \\
W_{n,s_1,n}(x) &= 0,    \;\   0 \leq s_1 < n,
\end{align*}
where $z^+ = \max{(0,z)}$.
The winning probability of the first player 
at the starting position is simply the following expression: 
\[   W_{n,0, 0}(x) \bigg\rvert_{x=1}. \]

The multi-player scenario can be explained similarly. 
This leads us to the second conceptual theory based on our previous results in \cite{TZ}.

\begin{thm} \label{Concept2}
In a pile game with boundary, the winning/losing probability generating function for
two or more player is a rational function.
Hence, the winning/losing probability is a rational number given that
the transition probabilities for all outcomes in the set of choices are rational numbers.
\end{thm}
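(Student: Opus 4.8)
The plan is to set up the linear system governing the winning/losing probability generating functions for a finite number of players and argue that it has a unique rational solution via Cramer's rule, exactly as was done in the single-player case leading to Theorem \ref{Concept1}. First I would fix the number of players $d\geq 2$ and the set of choices $R=\{r_1,\dots,r_m\}$ with rational transition probabilities $p(r_1),\dots,p(r_m)$. For a state of the game I would record the vector of current chip counts $(s_1,\dots,s_d)$ of all players together with the index of the player whose turn it is; since each $s_i$ is truncated below at $0$ and the game halts as soon as some $s_i\geq n$, the number of non-absorbing states is finite (bounded by $d\cdot n^d$). To each such state I attach the generating function $W^{(j)}_{\mathbf{s}}(x)$ recording the probability that player $1$ eventually wins, weighted by $x^{(\text{number of turns taken})}$; one step of the game expresses $W^{(j)}_{\mathbf{s}}(x)$ as $x$ times a convex combination (with coefficients $p(r_i)$) of the $W$-functions of the successor states, with the boundary conventions $W=1$ at a state where player $1$ has just reached $n$ and $W=0$ at a state where some other player has.

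Next I would phrase this as a matrix equation $(I - xM)\,\mathbf{W}(x) = x\,\mathbf{c}(x)$, where $M$ is the (finite) substochastic transition matrix among non-absorbing states, $\mathbf{c}(x)$ is the vector of contributions from transitions into absorbing states (a polynomial vector, in fact with entries that are polynomials in the $p(r_i)$ of degree one in $x$), and $\mathbf{W}(x)$ is the vector of unknown generating functions. The key point is that $\det(I - xM)$ is a polynomial in $x$ with constant term $1$, hence not identically zero, so by Cramer's rule each coordinate of $\mathbf{W}(x)$ equals a quotient of polynomials in $x$ — i.e.\ a rational function. This is precisely the structure already invoked for $G_{n,s}(x)$ and for $T_{n,s_1,s_2}(x)$ after equations \eqref{Sys1:c} and \eqref{Sys2}; I would simply note that the argument is insensitive to the number of players. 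The same reasoning applied verbatim to the losing generating functions $L$ (with boundary values $0$ and $1$ swapped) gives their rationality.

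Finally, to get the numerical conclusion I would evaluate at $x=1$. Here a small amount of care is needed: I must argue that the game ends with probability $1$ from every starting position, so that the series $\sum_k w_n(k,\mathbf{s})x^k$ actually converges at $x=1$ and the rational function $W_{n,\mathbf{s}}(x)$ has no pole there. Granting that (it follows from standard absorbing-Markov-chain theory since every non-absorbing state can reach an absorbing one, so $M^t\to 0$), the winning probability is $W_{n,0,\dots,0}(1)$, which is a ratio of two real numbers obtained by evaluating polynomials with coefficients in $\mathbb{Q}[p(r_1),\dots,p(r_m)]$ at $x=1$; if all $p(r_i)\in\mathbb{Q}$ this is a rational number. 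I expect the main obstacle to be nothing deep but rather bookkeeping: writing down the state space and the boundary conventions cleanly when chip counts are truncated at $0$ and when ``$n$ or more'' (rather than exactly $n$) triggers absorption, and making sure the ``ends with probability $1$'' justification for evaluating at $x=1$ is stated rather than silently assumed. One could also remark that the degree bound on the denominator, namely the number of non-absorbing states, is the multi-player analogue of the ``degree at most $n$'' bound in Theorem \ref{Concept1}, though the statement as given only claims rationality.
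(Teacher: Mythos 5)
Your proposal is correct and follows essentially the same route as the paper: the authors also set up the finite linear system for the winning/losing generating functions (as they do explicitly for $W_{n,s_1,s_2}(x)$ just before the theorem) and invoke Cramer's rule to conclude rationality, remarking that the multi-player case is analogous. Your additional care about the determinant being nonzero and about the absence of a pole at $x=1$ (via the game ending with probability $1$) fills in details the paper leaves implicit, but it is the same argument.
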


In this paper, based on Theorem \ref{Concept1} and Theorem \ref{Concept2}, we derive explicit formulas and relations for numerous quantities and generating functions stated in Section \ref{section1.1} and Section \ref{section1.2}. We complete this task by using our beloved program, Maple, for symbolic computation. Indeed, we can even go beyond that. It seems that, for each set of choices $R$, we can always find relationships between generating functions for different $n$ (which is one of our main goals in this paper). More interesting patterns and relations can be found by using different sets of choices $R$. It showcases how far one can go with the mindset of an experimental mathematician. 

\begin{rem}
It is worthwhile to mention that all results on generating functions
are obtained in three different ways. The first method is based on counting the paths directly and then \textit{guessing} the rational generating functions from them.
The second method is based on solving the system of linear equations and hence obtaining the rational generating functions.
The third method is based on applying recurrence relations which will be mentioned later. Once the three results are the same,
we are pretty sure that the answers are correct! 
\end{rem}

\section{The case $R = \{1, -1 \}$ for one player } \label{One}

The case when both members of the set of choices $R$ 
are positive have already been done by Wong and Xu \cite{WX} and the second author and Zeilberger \cite{TZ}. In such cases, the games considered are games of pure chance without restricted boundary. 

In this section, we consider the simplest case for the set of choices $R$, i.e., $R = \{1, -1 \}$. We consider this as a pile game with boundary for one player.

\subsection{Probability Generating functions $G_{n,s}(x)$} \label{section2.1}

We recall the following definition of $G_{n,s}(x)$ from Section \ref{section1.1}: 
\[G_{n,s}(x) = \sum_{k=0}^{\infty} B_{n}(k,s)x^k\]
where $B_n(k,s)$ is the probability to end the game
at the $k^{\text{th}}$ turn while the player starts with $s$ chips. 

We obtain the recurrence relation for $G_{n,s}(x)$ in $s$ based on the system of equations
\eqref{Sys1:a} and \eqref{Sys1:b}. We want to solve for the recurrence relation $G_{n,s}(x)$ in parameter $n$. 
We start with the equation \eqref{Sys1:b} for $s=n-1$,
\begin{equation} \label{Ant}
G_{n,n-1}(x) = qx{G_{n,n-2}(x)}+px.  
\end{equation}

Then, we multiply both sides with  
$G_{n-1,s}(x) \cdot G_{n-2,s}(x)$ 
and simplify it by using the combinatorial relation
\begin{equation}  \label{T1}
G_{n,s}(x) = G_{n,m}(x) \cdot G_{m,s}(x), \;\ \;\ 0 \leq s \leq m \leq n,
\end{equation} for the cases $m=n-1$ and $m=n-2$ to get
\begin{equation}  \label{Good}
 G_{n,s}(x) G_{n-2,s}(x) = qxG_{n,s}(x)G_{n-1,s}(x)+pxG_{n-1,s}(x)G_{n-2,s}(x),
\;\  0 \leq s \leq n-2.
\end{equation}

We rewrite this equation in terms of $G_{n,s}(x)$ as
\begin{equation}  \label{Aek2}
  \dfrac{1}{G_{n,s}(x)} 
  = \dfrac{1}{pxG_{n-1,s}(x)}-\dfrac{q}{pG_{n-2,s}(x)},  \;\   0 \leq s \leq n-2. 
\end{equation}

We are able to generate $G_{n,s}(x)$ recursively by
using \eqref{Aek2} when $s=0$ and using \eqref{T1} in the following form:
\begin{equation} \label{T4}
G_{n,s}(x) = \dfrac{G_{n,0}(x)}{G_{s,0}(x)}, \;\ \;\  0 \leq s \leq n
\end{equation}
when $s>0$; along with the trivial initial condition $G_{0,0}(x)=1$. 
For example, we get 
\begin{align*}
G_{0,0}(x) &= 1, \\
G_{1,0}(x) &= \dfrac{px}{1-qx}, \;\ \;\ G_{1,1}(x) = 1,\\
G_{2,0}(x) &= \dfrac{p^2x^2}{1-qx-qpx^2}, \;\ \;\ G_{2,1}(x) = \dfrac{(1-qx)px}{1-qx-qpx^2}, 
\;\ \;\ G_{2,2}(x) = 1.
\end{align*}

We recall that the denominator of $G_{n,s}(x)$ 
is particularly important as it gives the recurrence 
relation for $B_n(k,s)$ in parameter $k$ based on the equation \eqref{Q}.
The fact that the numerator of 
$G_{n,0}(x)$ is $(px)^n$ along with the relation \eqref{Aek2}
give us a way to calculate the denominator recursively. We state it as a corollary.

\begin{cor} \label{Denom}
For any $n$ and $s$, $0\leq s \leq n,$
let $G_{n,s}(x) = \dfrac{P_{n,s}(x)}{Q_n(x)} $. Then
$Q_n(x)$ satisfies the recurrence relation,
\[  Q_n(x) = Q_{n-1}(x)-qpx^2Q_{n-2}(x) ,\] 
with $Q_0(x)=1, Q_1(x)=1-qx.$
\end{cor}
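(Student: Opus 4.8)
The plan is to run an induction on $n$ that simultaneously establishes the shape of $G_{n,0}(x)$ and the asserted recurrence for its denominator. Concretely, I would \emph{define} $Q_n(x)$ by $Q_0(x)=1$, $Q_1(x)=1-qx$, and $Q_n(x)=Q_{n-1}(x)-qpx^2Q_{n-2}(x)$ for $n\ge 2$, and then prove by induction (with two base cases) that $G_{n,0}(x)=(px)^n/Q_n(x)$. Granting this, the statement for all $0\le s\le n$ follows at once from the multiplicativity relation \eqref{T4}: one gets $G_{n,s}(x)=G_{n,0}(x)/G_{s,0}(x)=(px)^{n-s}Q_s(x)/Q_n(x)$, so $P_{n,s}(x):=(px)^{n-s}Q_s(x)$ does the job and the denominator is the single polynomial $Q_n(x)$, independent of $s$.

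The base cases $n=0,1$ are simply the already-computed $G_{0,0}(x)=1$ and $G_{1,0}(x)=px/(1-qx)$. For the inductive step I would substitute the inductive hypotheses $G_{n-1,0}(x)=(px)^{n-1}/Q_{n-1}(x)$ and $G_{n-2,0}(x)=(px)^{n-2}/Q_{n-2}(x)$ into equation \eqref{Aek2} specialized to $s=0$. Putting the two fractions on the right-hand side over the common denominator $(px)^n$ turns that identity into $1/G_{n,0}(x)=\bigl(Q_{n-1}(x)-qpx^2Q_{n-2}(x)\bigr)/(px)^n$, i.e. $G_{n,0}(x)=(px)^n/Q_n(x)$ with $Q_n(x)$ exactly as defined. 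Along the way it is worth noting that $Q_n(0)=1$ for every $n$ (read off the recurrence and initial values), so $(px)^n$ and $Q_n(x)$ are coprime; this pins the numerator of $G_{n,0}(x)$ down to be \emph{exactly} $(px)^n$ — the fact quoted just before the corollary — rather than merely a multiple of it, and hence legitimizes reading the $Q_n$-recurrence directly off the recurrence for $1/G_{n,0}(x)$.

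I do not expect a genuine obstacle: the substitutive identity \eqref{Aek2} and the combinatorial relation \eqref{T4} are already established, so the only thing that needs care is the bookkeeping of powers of $px$ when combining the two terms of \eqref{Aek2}, together with the coprimality remark above. As a small bonus the recurrence instantly yields $\deg Q_n\le n$ by induction (since $\deg Q_{n-1}\le n-1$ and $\deg(x^2Q_{n-2})\le n$), re-deriving in this case the degree bound from Theorem~\ref{Concept1}.
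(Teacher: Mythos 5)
Your proposal is correct and follows exactly the route the paper indicates (the paper gives no formal proof, only the remark that the numerator of $G_{n,0}(x)$ being $(px)^n$ together with relation \eqref{Aek2} yields the denominator recurrence): induct on $n$ via \eqref{Aek2} at $s=0$ to get $G_{n,0}(x)=(px)^n/Q_n(x)$, then use \eqref{T4} to carry the common denominator to all $s$. Your added observation that $Q_n(0)=1$, which makes $(px)^n$ and $Q_n(x)$ coprime and so pins down the numerator exactly, is a worthwhile clarification the paper leaves implicit.
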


\subsection{Average Numbers of Turns and Higher Moments}  \label{TwoTwo}
Let $X_{n,s}$ be the random variable of the number of turns required to end the game, i.e., the player collects $n$ chips (or more) 
for the first time starting with $s$. In each turn, the probability of adding one chip to his pile is $p$ and the probability of removing one chip from his pile is $q=1-p$. We consider it as a pile game with boundary.

The first moment is the average number of turns as follows:
\[   E[X_{n,s}] =  \sum_{k=0}^{\infty} k P\{X_{n,s}=k\}.  \]
We derive formulas for $ E[X_{n,s}]$ by using the generating function $G_{n,s}(x)$ as follows:
\[ E[X_{n,s}] = x\dfrac{d\, G_{n,s}(x)}{dx} \bigg\rvert_{x=1}. \]

For the case of $p=q=1/2$, we have a nice polynomial solution for the expectation $E[X_{n,s}]$.
\begin{prop} \label{Prop1} Let $p=1/2$. Then, we have
\[  E[X_{n,s}] = n(n+1)-s(s+1).\]
\end{prop}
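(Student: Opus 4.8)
The plan is to bypass the generating function entirely and compute $E[X_{n,s}]$ directly by first-step analysis. Write $e_{n,s}:=E[X_{n,s}]$. Since the $\pm1$ walk (each step with probability $1/2$), reflected at $0$ and absorbed at $n$, is a finite Markov chain in which the absorbing state is reachable from every state, $e_{n,s}$ is finite; equivalently, by Theorem \ref{Concept1} the function $G_{n,s}(x)$ is rational with no pole at $x=1$ and $G_{n,s}(1)=1$, so $e_{n,s}=G_{n,s}'(1)$ is well defined. Conditioning on the outcome of the first turn then yields, for fixed $n$, a linear system in $s$: a reflecting condition at the boundary $s=0$ (since a ``$-1$'' step from $0$ leaves the pile at $0$), a discrete Poisson equation in the interior $1\le s\le n-1$, and the terminal value $e_{n,n}=0$.

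Concretely, with $p=q=1/2$ the first-step analysis gives
\[ e_{n,0}=1+\tfrac12 e_{n,1}+\tfrac12 e_{n,0}, \qquad e_{n,s}=1+\tfrac12 e_{n,s+1}+\tfrac12 e_{n,s-1}\ \ (1\le s\le n-1), \qquad e_{n,n}=0, \]
that is: $e_{n,0}=e_{n,1}+2$; the interior equation says the second difference of $s\mapsto e_{n,s}$ equals $-2$; and $e_{n,n}=0$. The next step is simply to check that the candidate $e_{n,s}=n(n+1)-s(s+1)$ satisfies all three conditions: the second difference of $-s(s+1)=-s^{2}-s$ is the constant $-2$; at $s=n$ the value is $n(n+1)-n(n+1)=0$; and $e_{n,0}-e_{n,1}=n(n+1)-\bigl(n(n+1)-2\bigr)=2$.

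It then remains only to argue that the solution of this linear system is unique, so that the verified formula is \emph{the} answer. This follows because the associated homogeneous system has only the trivial solution: the homogeneous interior equations force $h_s$ to be affine in $s$ on $\{0,1,\dots,n\}$, say $h_s=A+Bs$; the homogeneous boundary relation $h_0=h_1$ forces $B=0$; and $h_n=0$ then forces $A=0$. Hence $e_{n,s}=n(n+1)-s(s+1)$, as claimed.

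The only genuinely delicate point is the justification of the first-step conditioning, i.e.\ that $e_{n,s}<\infty$; this is routine here (a finite absorbing chain, or the rationality of $G_{n,s}$ from Theorem \ref{Concept1} together with $G_{n,s}(1)=1$). As a cross-check, the same recursion drops out of differentiating the system \eqref{Sys1:a}--\eqref{Sys1:c} at $x=1$ and using $G_{n,s}(1)=1$; and, more slickly, one can observe that $\phi(S_k)+k$ is a (stopped) martingale for the reflected walk $S_k$ started at $s$ with $\phi(s)=-s(s+1)$, whence optional stopping gives $-s(s+1)=E\bigl[\phi(S_\tau)+\tau\bigr]=-n(n+1)+E[X_{n,s}]$ at once.
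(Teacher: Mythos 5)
Your proof is correct, and its core is the same linear system in $s$ that the paper verifies against: the paper obtains the equations $E[X_{n,0}]=1+\tfrac12E[X_{n,1}]+\tfrac12E[X_{n,0}]$, $E[X_{n,s}]=1+\tfrac12E[X_{n,s+1}]+\tfrac12E[X_{n,s-1}]$, $E[X_{n,n}]=0$ by applying $x\frac{d}{dx}\big\rvert_{x=1}$ to \eqref{Sys1:a}--\eqref{Sys1:c}, whereas you get them by first-step analysis; these are the same equations. The genuine added value of your write-up is that you close two gaps the paper leaves implicit: (i) finiteness of $E[X_{n,s}]$, which is needed before either the differentiation at $x=1$ or the first-step conditioning is legitimate, and (ii) uniqueness of the solution of the boundary-value problem in $s$, without which ``the candidate satisfies the recurrences'' does not by itself pin down $E[X_{n,s}]$ (your affine-homogeneous-solution argument handles this cleanly, and it also makes the paper's additional check of the $n$-recurrence coming from \eqref{Good} unnecessary). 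The optional-stopping remark with $\phi(s)=-s(s+1)$ is a genuinely different and shorter proof, though one should note (as you implicitly do) that it needs the same finiteness of the stopping time, plus boundedness of the stopped martingale's increments, to invoke optional stopping.
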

\begin{proof}
We only need to verify the solution with the recurrence relations in $s$ and $n$. For the recurrence relation in $s$, we differentiate 
both sides of the equations with respect to $x$ in \eqref{Sys1:a}, \eqref{Sys1:b}, \eqref{Sys1:c}, and then 
multiply both sides by $x$ and set $x=1$  to obtain
\begin{align*}
 E[X_{n,0}] &= 1+\dfrac{1}{2} E[X_{n,1}]+\dfrac{1}{2} E[X_{n,0}],  \\
 E[X_{n,s}] &= 1+\dfrac{1}{2} E[X_{n,s+1}]+\dfrac{1}{2} E[X_{n,s-1}], \;\ \;\ 1 \leq s < n, \\
 E[X_{n,n}] &= 0 . 
\end{align*}for $n\geq1$.

For the recurrence relation in $n$, we apply the same 
differentiation process to equation \eqref{Good} to obtain 

\[   \dfrac{1}{2}E[X_{n,s}] +  \dfrac{1}{2}E[X_{n-2,s}] = 1+ E[X_{n-1,s}].   \]

\end{proof}

We proceed with the same strategy for the higher moments $E[X_{n,s}^r]$, for $r \geq 1$. 
For each $r$, the formulas for $E[X_{n,s}^r]$ 
are obtained by interpolating
the data generated by the actual generating functions $G_{n,s}(x)$.
For the solutions up to $8^{\text{th}}$-moment, please see the output file.

To verify the formulas rigorously, we apply the operator 
$ \left( \left( x\dfrac{d\,}{dx} \right)^r  
 \bullet \right) \bigg\rvert_{x=1}$ 
to both sides of system of linear equations 
\eqref{Sys1:a}, \eqref{Sys1:b}, \eqref{Sys1:c} and 
\eqref{Good}.
The new conditions are
\begin{align*}
E[X_{n,0}^r] &= E[X_{n,0}^{r-1}]+\dfrac{1}{2} 
\sum_{i=1}^{r}  \binom{r-1}{i-1} (E[X_{n,1}^i]+E[X_{n,0}^i]), \\
E[X_{n,s}^r] &= E[X_{n,s}^{r-1}]+\dfrac{1}{2} 
\sum_{i=1}^{r}  \binom{r-1}{i-1} (E[X_{n,s+1}^i]+E[X_{n,s-1}^i])
,  \;\  \;\   1\leq s<n, \\
E[X_{n,n}^r] &= 0 , \\
\Delta(r,n,n-2) &= \dfrac{1}{2} \sum_{k=0}^r \binom{r}{k}[\Delta(k,n,n-1)+\Delta(k,n-1,n-2)], \\
& \mbox{where} \Delta(r,a,b) = \sum_{i=0}^r \binom{r}{i} E[X_{a,s}^i]E[X_{b,s}^{r-i}].
\end{align*}
We then check the formulas satisfies these new conditions.

\begin{prop}  \label{Prop2}Let $p=q=1/2$. We have the following expressions for the higher moments of the random variable $X_{n,s}$.
 \begin{align*}
&E[X_{n,s}^2] =  \sum_{k=0}^{\infty} k^2 P\{X_{n,s}=k\}
=\dfrac{(n-s)(n+s+1)(5n^2+5n-s^2-s-1)}{3}, \\  
&E[X_{n,s}^3] =  \sum_{k=0}^{\infty} k^3 P\{X_{n,s}=k\}
=\dfrac{(n-s)(n+s+1)}{15}\cdot \\
&(61n^4+122n^3-(14s^2+14s-38)n^2-(14s^2+14s-23)n+s^4+2s^3+8s^2+7s-3).  
\end{align*}
\end{prop}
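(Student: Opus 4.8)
The plan is to carry out exactly the verification scheme described in the paragraph preceding the statement, organised as an induction on the moment order $r$. The formula for $E[X_{n,s}^2]$ rests on the first moment $E[X_{n,s}]=n(n+1)-s(s+1)$ from Proposition \ref{Prop1}, and the formula for $E[X_{n,s}^3]$ rests on the first and on the (by then established) second moment. In general the identities obtained by applying $\big(\big(x\tfrac{d}{dx}\big)^r\bullet\big)\big|_{x=1}$ to the systems \eqref{Sys1:a}--\eqref{Sys1:c} and to \eqref{Good} express the order-$r$ moments in terms of moments of order $\le r$, so once the lower-order formulas are known it remains only to (i) write down the candidate polynomials in $n$ and $s$, and (ii) check them against those identities.

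For step (i): for fixed $r$ the map $(n,s)\mapsto E[X_{n,s}^r]$ is a polynomial (which is ultimately what gets proved), which the low-order data shows to be divisible by $(n-s)(n+s+1)$ and of degree $2r$ in each of $n$ and $s$. One therefore computes the rational functions $G_{n,s}(x)$ for enough small pairs $(n,s)$ using \eqref{Aek2} at $s=0$ together with \eqref{T4} for $s>0$ and $G_{0,0}(x)=1$, extracts $\big(x\tfrac{d}{dx}\big)^r G_{n,s}(x)\big|_{x=1}$, and interpolates. This is a finite linear-algebra computation, delegated to the accompanying Maple package.

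For step (ii) the point that makes a mere verification legitimate is a uniqueness statement. Fix $n$ and move the order-$r$ terms of the $s$-recurrence to the left: for $1\le s<n$ it reads $E[X_{n,s}^r]-\tfrac12 E[X_{n,s+1}^r]-\tfrac12 E[X_{n,s-1}^r]=(\text{data in moments of order}\le r-1)$, together with the $s=0$ equation and the boundary value $E[X_{n,n}^r]=0$. This is a tridiagonal linear system in the unknowns $E[X_{n,0}^r],\dots,E[X_{n,n-1}^r]$ whose matrix is, up to the moved boundary term, the matrix of \eqref{Sys1:a}--\eqref{Sys1:c} specialised at $x=1$ and $p=q=\tfrac12$; its leading principal minors satisfy $D_k=D_{k-1}-\tfrac14 D_{k-2}$ with $D_1=\tfrac12$, hence $D_n=2^{-n}\neq 0$, so the matrix is nonsingular. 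Consequently, given the lower moments, the order-$r$ moments are the \emph{unique} solution of these equations, and it suffices to verify that the interpolated polynomial satisfies them — a polynomial identity check once the binomial sums are expanded. The remaining identity $\Delta(r,n,n-2)=\tfrac12\sum_{k=0}^{r}\binom{r}{k}\big[\Delta(k,n,n-1)+\Delta(k,n-1,n-2)\big]$ coming from \eqref{Good} is then an independent consistency check, realising the threefold cross-validation mentioned in the Remark.

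The main obstacle is bookkeeping rather than conceptual: the $s=0$ boundary equation and the quantities $\Delta(r,a,b)$ expand into polynomials in $n$ and $s$ of fairly high degree, and one must confirm that the relevant differences vanish identically; this is mechanical but tedious by hand, which is exactly why it is carried out symbolically. Once the nonsingularity above is noted, no genuine difficulty remains.
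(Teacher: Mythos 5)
Your proposal follows the paper's own strategy exactly: interpolate candidate polynomials from the generating-function data and then verify them against the moment recurrences obtained by applying $\left(x\frac{d}{dx}\right)^r\big|_{x=1}$ to \eqref{Sys1:a}--\eqref{Sys1:c} and \eqref{Good}. The one point you add --- the tridiagonal determinant computation $D_n=2^{-n}\neq 0$ showing that, given the lower-order moments, the order-$r$ system uniquely determines $E[X_{n,s}^r]$ --- is a correct and welcome explicit justification of a step the paper leaves implicit.
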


Higher moments about the mean, i.e., $E[(X_{n,s}-\mu_{n,s})^r]$ for $r>1$, can be calculated once we get the straight moments. We state the results of our computation here.
\begin{prop}  \label{Prop3}Let $p=q=1/2$. We have the following statistics for the random variable $X_{n,s} - \mu_{n,s}$.
\begin{align*}
 &Var[X_{n,s}] = E[(X_{n,s}-\mu_{n,s})^2] 
=\dfrac{(n-s)(n+s+1)(2n^2+2n+2s^2+2s-1)}{3},  \\
 &E[(X_{n,s}-\mu_{n,s})^3] 
=\dfrac{(n-s)(n+s+1)}{15}\cdot \\
&(16n^4+32n^3+8n^2(2s^2+2s+1)+8n(2s^2+2s-1)+16s^4+32s^3+8s^2-8s-3).  \end{align*}
\end{prop}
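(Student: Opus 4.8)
The plan is to obtain the central moments of $X_{n,s}$ as polynomial consequences of the straight moments already recorded in Proposition \ref{Prop2} together with the first moment $\mu_{n,s}=E[X_{n,s}]=n(n+1)-s(s+1)$ from Proposition \ref{Prop1}. The elementary identity
\[
E[(X_{n,s}-\mu_{n,s})^r]=\sum_{j=0}^{r}\binom{r}{j}(-\mu_{n,s})^{\,r-j}E[X_{n,s}^{\,j}]
\]
reduces the variance to a combination of $E[X_{n,s}^2]$, $\mu_{n,s}E[X_{n,s}]=\mu_{n,s}^2$ and $\mu_{n,s}^2$, and the third central moment to a combination of $E[X_{n,s}^3]$, $\mu_{n,s}E[X_{n,s}^2]$, $\mu_{n,s}^2E[X_{n,s}]=\mu_{n,s}^3$ and $\mu_{n,s}^3$. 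So the proof is, in principle, nothing more than substituting the known polynomial expressions and simplifying.

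Concretely, I would first write $Var[X_{n,s}]=E[X_{n,s}^2]-\mu_{n,s}^2$. Plugging in $\mu_{n,s}=(n-s)(n+s+1)$ (note $n(n+1)-s(s+1)=(n-s)(n+s+1)$) and the formula $E[X_{n,s}^2]=\tfrac{1}{3}(n-s)(n+s+1)(5n^2+5n-s^2-s-1)$, one factors out the common factor $(n-s)(n+s+1)$ and is left with verifying the polynomial identity
\[
\tfrac{1}{3}(5n^2+5n-s^2-s-1)-(n-s)(n+s+1)=\tfrac{1}{3}(2n^2+2n+2s^2+2s-1),
\]
which is a direct expansion. For the third central moment I would use
\[
E[(X_{n,s}-\mu_{n,s})^3]=E[X_{n,s}^3]-3\mu_{n,s}E[X_{n,s}^2]+2\mu_{n,s}^3,
\]
again factor out $(n-s)(n+s+1)$ from all three terms (for the middle term note $\mu_{n,s}=(n-s)(n+s+1)$, so the factor appears with multiplicity two there and with multiplicity three in $2\mu_{n,s}^3$, but after dividing the whole expression by $(n-s)(n+s+1)$ each term still carries an explicit $(n-s)(n+s+1)$ where needed), and reduce to a degree-four polynomial identity in $n$ and $s$ that matches the claimed $16n^4+32n^3+\dots-3$.

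As an independent check — matching the \emph{three different ways} philosophy stated in the Remark — I would also verify that these formulas satisfy the recurrences forced on them. Since $X_{n,s}-\mu_{n,s}$ is a shift of $X_{n,s}$, one can apply the operator $\left(\left(x\frac{d}{dx}\right)^r\bullet\right)\big|_{x=1}$ after the substitution $G_{n,s}(x)\mapsto e^{-\mu_{n,s}t}G_{n,s}(e^t)$ is not needed; more simply, one feeds the polynomial expressions into the same $\Delta$-type relations derived from \eqref{Sys1:a}--\eqref{Sys1:c} and \eqref{Good} that were used for the straight moments, and confirms equality. Honestly, there is no real obstacle here: the only thing that could go wrong is an algebraic slip in expanding the quartics, so the main "difficulty" is just bookkeeping, which I would off-load to Maple exactly as the authors do elsewhere, and then present the short hand-verification of the factored identities above.
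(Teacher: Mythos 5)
Your method is exactly the paper's: the authors offer nothing beyond the remark that the central moments ``can be calculated once we get the straight moments,'' and your variance computation carries this out completely and correctly --- with $\mu_{n,s}=(n-s)(n+s+1)$ the factored identity you display is a one-line expansion. However, the third-central-moment step, which you assert ``matches the claimed $16n^4+32n^3+\dots-3$'' without actually expanding it, would \emph{fail} if executed literally, because the formula for $E[X_{n,s}^3]$ printed in Proposition \ref{Prop2} contains a sign error: the linear-in-$n$ term should read $-(14s^2+14s+23)n$, not $-(14s^2+14s-23)n$. You can see this already at $(n,s)=(1,0)$, where $X_{1,0}$ is geometric with parameter $1/2$, so $E[X_{1,0}^3]=26$, whereas the printed formula gives $\frac{2}{15}(61+122+38+23-3)=\frac{482}{15}$; the corrected sign gives $\frac{2}{15}(195)=26$.

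With the corrected straight moment your plan does close. Writing $P=n^2+n$, $Q=s^2+s$, $A=\mu_{n,s}=P-Q$, one has
\begin{align*}
\tfrac{15}{A}\,E[X_{n,s}^3]&=61P^2-14PQ+Q^2-23P+7Q-3,\\
\tfrac{15}{A}\left(3\mu_{n,s}E[X_{n,s}^2]-2\mu_{n,s}^3\right)&=45P^2-30PQ-15Q^2-15P+15Q,
\end{align*}
and the difference $16P^2+16PQ+16Q^2-8P-8Q-3$ is precisely the quartic factor claimed in Proposition \ref{Prop3}, which is therefore correct (and also checks numerically, e.g.\ $E[(X_{2,0}-6)^3]=822-3\cdot6\cdot58+2\cdot216=210$ against the formula's $\frac{6}{15}\cdot 525=210$). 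So the gap in your proposal is not conceptual but concrete: the one identity you left unverified is exactly the one that, with the propositions as printed, does not hold, and you must first repair (or independently recompute) the third straight moment before the substitution goes through.
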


For the general case of $p$ and $q=1-p$, the expectation  
$E[X_{n,s}^r]$ can be shown to be C-finite sequences in variables $n$ and $s$ respectively (although they are not 
polynomial solutions).
The recurrence in $n$ and $s$ can be seen through the system of equations
\eqref{Sys1:a}, \eqref{Sys1:b}, \eqref{Sys1:c} and the equation \eqref{Good}. It is 
similar to what we have done for the case of $p=q=1/2$ case. For example,
the recurrence relations for $E[X_{n,s}]$ are
\begin{align*}  
(N-1)^2(pN-q) E[X_{n,s}] =0, \\
(S-1)^2(pS-q) E[X_{n,s}] =0
\end{align*}
where $N \cdot E[X_{n,s}] := E[X_{n+1,s}] $
and   $S \cdot E[X_{n,s}] := E[X_{n,s+1}]. $
Based on these recurrence relations, we derive closed-form formulas for $E[X_{n,s}]$ for the general case of $p$ and $q$ algebraically as follows:

\begin{prop}  \label{Prop4}
\[ E[X_{n,s}] = \dfrac{  (2p-1)n+q\left( \dfrac{q}{p}\right)^n 
- \left[(2p-1)s+q\left( \dfrac{q}{p}\right)^s \right] }{(2p-1)^2}   .\]
\end{prop}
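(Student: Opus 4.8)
The plan is to bypass the generating functions and solve directly the linear system that $E[X_{n,s}]$ satisfies as a function of $s$, then determine the two free constants from the boundary behaviour at $s=0$ and $s=n$. First I would record that system. For $R=\{1,-1\}$ with $0<p<1$ the chain on $\{0,1,\dots,n\}$ with $n$ absorbing reaches $n$ from every state (a run of $n$ consecutive $+1$ moves has positive probability), so the game terminates almost surely, $E[X_{n,s}]<\infty$ by a geometric tail estimate on the event ``not yet absorbed after $kn$ turns'', and $G_{n,s}(1)=1$. Hence the operator $x\frac{d}{dx}(\bullet)$ may be applied to \eqref{Sys1:a}, \eqref{Sys1:b}, \eqref{Sys1:c} and evaluated at $x=1$; exactly as in the proof of Proposition \ref{Prop1}, but keeping $p$ and $q$ general, this yields
\begin{align*}
pE[X_{n,0}] &= 1 + pE[X_{n,1}], \\
E[X_{n,s}] &= 1 + pE[X_{n,s+1}] + qE[X_{n,s-1}], \qquad 1\leq s<n, \\
E[X_{n,n}] &= 0.
\end{align*}
For fixed $n$ this is a tridiagonal system in the $n+1$ unknowns $E[X_{n,0}],\dots,E[X_{n,n}]$, and it has a unique solution: the associated homogeneous system forces $E[X_{n,s}]$ to be constant in $s$ (propagate $D_0=D_1=c$ through $D_{s+1}=(D_s-qD_{s-1})/p$ to get $D_s\equiv c$), and the last equation then makes that constant $0$. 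So it suffices to exhibit a solution.

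Next I would solve the recurrence $pE[X_{n,s+1}] - E[X_{n,s}] + qE[X_{n,s-1}] = -1$ in $s$. Its characteristic polynomial is $px^2 - x + q = (x-1)(px-q)$, with roots $1$ and $q/p$, which are distinct precisely when $p\neq\tfrac12$; the excluded case $p=q=\tfrac12$ is Proposition \ref{Prop1}, and the formula of Proposition \ref{Prop4} is to be read there as the limit $p\to\tfrac12$. An ansatz $\kappa s$ for a particular solution gives $\kappa(p-q)=-1$, i.e.\ $\kappa=-1/(2p-1)$, so the general solution is
\[ E[X_{n,s}] = \lambda(n) + \mu(n)\left(\frac{q}{p}\right)^{s} - \frac{s}{2p-1}. \]
Substituting into $pE[X_{n,0}] = 1 + pE[X_{n,1}]$ yields $\mu(n) = -q/(2p-1)^2$, independent of $n$; substituting into $E[X_{n,n}]=0$ yields $\lambda(n) = n/(2p-1) + q(q/p)^n/(2p-1)^2$. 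Placing everything over the common denominator $(2p-1)^2$ gives exactly the claimed expression.

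I do not expect a genuine obstacle here; the core is routine linear-recurrence solving. The two points that need care are (i) the termination and finiteness facts that license passing to $x=1$, and (ii) the degenerate case $p=\tfrac12$, where the two characteristic roots coalesce and the closed form must be interpreted as a limit, consistent with Proposition \ref{Prop1}. Finally, as an optional consistency check and to connect the result to the operator annihilators $(N-1)^2(pN-q)$ and $(S-1)^2(pS-q)$ quoted just before the proposition, I would verify directly that the closed form satisfies the $n$-recurrence $pE[X_{n,s}] + qE[X_{n-2,s}] = 1 + E[X_{n-1,s}]$ obtained by applying $x\frac{d}{dx}(\bullet)\big|_{x=1}$ to \eqref{Good}; this is automatic once the $s$-data is matched, but it provides an independent sanity check of the algebra.
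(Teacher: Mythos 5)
Your proposal is correct and follows essentially the route the paper intends: it reduces to the constant-coefficient linear recurrence in $s$ with characteristic roots $1$ (double, after absorbing the constant inhomogeneity) and $q/p$ — i.e., the annihilator $(S-1)^2(pS-q)$ quoted just before the proposition — and fixes the constants from the reflecting condition at $s=0$ and the absorbing condition at $s=n$. You merely supply details the paper leaves implicit (almost-sure termination licensing evaluation at $x=1$, uniqueness of the boundary-value solution, and the $p=\tfrac12$ limit), all of which check out.
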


Again, it is possible to derive formulas for the general case of $p$ and $q$ for higher moments rigorously. 
We run our program to obtain the annihilators of the equation, i.e., the recurrence relation for $E[X_{n,s}^r]$ for fixed $r$ such that $r>1$ 
from the original data.
Please see the output file for more information about the 
annihilators in $n$ and $s$ for the higher moments.

\subsection{Formulas for $b_{n}(n+t,s)$}

The theory about pile games with boundary is quite rich in the sense that
there are non-trivial patterns among the assoicated quantities and generating functions everywhere. 
In this section, we formulate the formulas for $b_{n}(k,s)$ 
when $k = n+t$. We recall that $b_n(k,s)$ is the number of ways 
that the game ends (by collecting $n$ chips for the first time)
at the $k^{\text{th}}$ turn while starting with $s$ chips.
Although this is not related to other materials in this paper,
it is an interesting counting problem on its own.

\begin{thm}  \label{Six}
Let $0 \leq s \leq n$ and $-n < t,$ \\ 
Case1: $2| (t+s):$ for $ t-s \leq 2n$,
\begin{equation} \label{c} 
b_n(n+t,s) = \dfrac{n-s}{n+t}\binom{n+t}{\frac{t+s}{2}}.
\end{equation}
Case2: $2 \not| (t+s):$ for $t+s < 2n,$
\begin{equation} \label{d} 
 b_n(n+t,s) = \dfrac{n+s+1}{n+t}\binom{n+t}{\frac{t-s-1}{2}}. 
 \end{equation}
\end{thm}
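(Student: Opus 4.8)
The plan is to prove \eqref{c} and \eqref{d} together, for fixed $n\ge 1$, by induction on the number of turns $k=n+t$, driven by the defining system \eqref{b1}--\eqref{b3}. First I would collapse the two cases into a single auxiliary quantity: for $k\ge 1$ and every integer $s$, put
\[
\beta_n(n+t,s):=
\begin{cases}
\dfrac{n-s}{\,n+t\,}\dbinom{n+t}{(t+s)/2} & \text{if } t+s \text{ is even},\\[2.2ex]
\dfrac{n+s+1}{\,n+t\,}\dbinom{n+t}{(t-s-1)/2} & \text{if } t+s \text{ is odd},
\end{cases}
\]
with the convention $\dbinom{N}{j}=0$ when $j<0$ or $j>N$, and at $k=0$ keep the prescribed initial data $b_n(0,n)=1$, $b_n(0,s)=0$ for $s<n$. (The formulas degenerate to $0/0$ at the corner $(k,s)=(0,n)$, so that point is handled by hand.) Two short comparisons of the two branches then dissolve both the case distinction and the apparent special role of $s=0$ in \eqref{b1}: (a) $\beta_n(k,-1)=\beta_n(k,0)$, so that under the convention $b_n(j,-1):=b_n(j,0)$ equation \eqref{b1} becomes exactly the $s=0$ instance of \eqref{b2}; and (b) $\beta_n(k,n)=0$ whenever $k\ge 1$ and $(n,t,n)$ lies in the range where \eqref{c}--\eqref{d} are asserted, matching \eqref{b3}.

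The engine of the induction is the single binomial identity
\[
\frac{M}{K}\binom{K}{D}=\frac{M+1}{K-1}\binom{K-1}{D-1}+\frac{M-1}{K-1}\binom{K-1}{D},\qquad 2D=K-M,\ \ K\ge 2,
\]
which falls out of Pascal's rule combined with the absorptions $\binom{K-1}{D-1}=\tfrac{D}{K}\binom{K}{D}$, $\binom{K-1}{D}=\tfrac{K-D}{K}\binom{K}{D}$ and the relation $2D=K-M$ (the bracketed numerator $(M+1)D+(M-1)(K-D)$ then simplifies to $M(K-1)$). A routine check shows that with $K=n+t$ one has $M=n-s$, $D=(t+s)/2$ in Case 1 and $M=n+s+1$, $D=(t-s-1)/2$ in Case 2, that $2D=K-M$ holds in both, and that the two children $(k-1,s-1)$ and $(k-1,s+1)$ remain in the same parity branch and carry the parameter pairs $(M+1,D-1)$ and $(M-1,D)$ in one order or the other. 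Hence the identity says precisely
\[
\beta_n(k,s)=\beta_n(k-1,s-1)+\beta_n(k-1,s+1),\qquad 0\le s<n,\ \ k\ge 2 .
\]
One also checks that the cutoffs $t-s\le 2n$ (Case 1) and $t+s<2n$ (Case 2) are preserved when passing from $(k,s)$ to $(k-1,s\pm 1)$, so the recursion never leaves the region where the theorem is claimed.

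Assembling the induction is then mechanical. The base cases are $k=0$, where $\beta_n(0,s)$ is the prescribed data, and $k=1$, where one verifies $\beta_n(1,s)=b_n(1,s)$ directly ($b_n(1,s)$ equals $1$ for $s=n-1$ and $0$ otherwise); the case $k=1$ genuinely must be done by hand since there the recurrence refers to $b_n(0,n)=1\ne 0$ and \eqref{b3} does not apply. For $k\ge 2$ and $(k,s)$ in range: if $s=n$, use (b) and \eqref{b3}; if $0\le s<n$, use the three-term relation above, the induction hypothesis at level $k-1$ (whose relevant values lie in range, as noted), and \eqref{b1}/\eqref{b2}, invoking (a) when $s=0$. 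I expect the only real work here to be the bookkeeping---aligning the parity branches, verifying (a), (b) and range-preservation, and dealing with the degenerate corner $(0,n)$---because once the one binomial identity is isolated everything else is purely formal.

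A more conceptual alternative would be bijective. The right-hand sides of \eqref{c} and \eqref{d} are ballot numbers (Dvoretzky--Motzkin cycle lemma): \eqref{c} counts $\pm1$ lattice paths of length $n+t$ starting at $0$ with all partial sums positive and final height $n-s$, and \eqref{d} does the same with final height $n+s+1$. One could try to match these against the paths enumerated by $b_n(n+t,s)$---walks on $\{0,1,2,\dots\}$ from $s$ whose down-steps at $0$ are lost (the walker stays at $0$) and which stop on first reaching $n$---for instance by deleting the down-steps taken at $0$ and reinserting them by stars and bars, or by a time-reversal. This is attractive, but the boundary behaviour at $0$ makes such a bijection awkward to set up cleanly, so the recurrence induction above is the route I would actually carry out.
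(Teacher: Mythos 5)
Your argument is correct and follows essentially the same route as the paper's: an inductive verification that the closed forms satisfy the three-term recurrence $b_n(k,s)=b_n(k-1,s-1)+b_n(k-1,s+1)$, with the two parity cases tied together at the $s=0$ boundary by the reflection identity $\beta_n(k,-1)=\beta_n(k,0)$ (which is exactly the paper's correspondence $d_n(n+t,-s-1)=c_n(n+t,s)$). The only differences are organizational --- you induct on $k$ and treat both branches at once, and you make explicit the single binomial identity and the range-preservation check that the paper leaves implicit in its ``termination of the induction'' discussion.
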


\begin{proof}
We denote the right hand side of the equation \eqref{c} by $c_n(n+t,s)$ for $2|(t+s)$
and the equation \eqref{d} by $d_n(n+t,s)$ for $2 \not|(t+s).$ 
We first verify the formula for $c_n(n+t,s)$ by 
induction on $t+s=0,2,4,\dots$; and $s=n, n-1, \dots, 0$ 
for each value of $t+s$.

We see that $c_n(n+t,s) = 0$ for $t+s < 0$. 
For the base cases, we have $c_n(n+t,s) = 1$ for $t+s = 0$ 
and $c_n(n+t,n) = 0$ for $-n < t$. 
All of these values trivially agree with the values of $b_n(n+t,s)$.
We will use the lines $t+s=0$ and $s=n$ as boundaries for our induction.
For the induction step, we verify that the formula of 
$c_n(n+t,s)$ satisfies the same
recurrence relation as $b_n(n+t,s)$ in the region $t+s=2,4,6,\dots$ and $s<n$
i.e.
\begin{equation} \label{RecC}
c_n(n+t,s)= c_n(n+t-1,s-1)+c_n(n+t-1,s+1).
\end{equation}
The induction is valid since $(t-1)+(s-1) < t+s$
and $t-1+(s+1) = t+s$ but $s < s+1$. 
We conclude that the equation $c_n(n+t,s)=b_n(n+t,s)$ is true in the region
that both $c_n$ and $b_n$ are defined, i.e.,  $2|(t+s), \;\ 0 \leq s \leq n$ and $t >-n$.

We note that the recurrence relation \eqref{RecC} of
$c_n(n+t,s)$ is also valid for $s \leq 0$. 
For example, if $s=0$, we have
\begin{align}    c_n(n+t,0) &= c_n(n+t-1,-1)+c_n(n+t-1,1).    \label{b4}       \end{align}
By the equation (\ref{b1}), we have the following relation
\begin{align} b_n(n+t,0) &= b_n(n+t-1,0)+b_n(n+t-1,1).\label{b5}\end{align}
By comparing the recurrence relations (\ref{b4}) and (\ref{b5}), we define $d_n(n+t-1,0)= c_n(n+t-1,-1)$.
By doing this comparison inductively for $s<0$, 
we have the following relation: \[  d_n(n+t,-s-1)= c_n(n+t,s).  \]

Lastly, the induction terminates on another side of the region. At the point $(t,s)=(n+1,-n-1)$, we see that 
$c_n(2n+1,-n-1)=d_n(2n+1,n)=1$ while 
$b_n(2n+1,n)=0.$ Hence, all the induction steps are terminated from this point and no more induction step is required. Therefore, another restriction for the 
valid region of $c_n$ is $t-s \leq 2n.$
\end{proof}

We note that we prove the formula in Theorem \ref{Six} algebraically. An alternative proof by combinatorial methods for this result is very welcome.

\section{The case $R=\{1,-1\}$ for two players } \label{Two}

In this section, we consider the set of choices $R=\{1,-1\}$. We consider this as a pile game with boundary for two players. We compute the winning probability of the first player. We also compute other statistics of
the number of rounds required until the first player wins or until either of the player wins
the game.

\subsection{The winning Probability Generating Function of the First Player}

We recall from Section \ref{section1.2} that the winning probability generating functions 
of the first player is defined by
\[W_{n,s_1,s_2}(x) := \sum_{k=0}^{\infty} w_{n}(k,s_1,s_2)x^k\]
where $w_{n}(k,s_1,s_2)$ is the probability
that the first player wins (by being the first one to collect $n$ chips or more) 
at the $k^{\text{th}}$ turn; and the first and the second players start with $s_1$ and  $s_2$ 
chips respectively. 

Let $C_n(k,s)$ be the probability that none of the players win the game up to their $k^{\text{th}}$ turn while starting with 
$s$ chips. We note that 
\[ C_n(k,s) = 1- \sum_{i=0}^k B_n(i,s).  \]
The probability generating function of $C_n(k,s)$ is defined by
\[ H_{n,s}(x) :=  \sum_{k=0}^{\infty} C_{n}(k,s)x^k.   \] 
This function satisfies the following relation:   
\[ H_{n,s}(x) = \dfrac{1-G_{n,s}(x)}{1-x}. \]

For a fixed value of $k$, we note that \[w_{n}(k,s_1,s_2) = B_{n}(k,s_1) \cdot C_n(k-1,s_2).\]
The term $w_n(k,s_1,s_2)$ is then the product of the diagonal terms of the generating functions 
$G_{n,s_1}(x)$ and $xH_{n,s_2}(x)$. 

The first method to find the formulas for $W_{n,s_1,s_2}(x)$ 
is by the residue theorem. More precisely, we find the residue
of \[ \dfrac{G_{n,s_1}(x/t)\cdot tH_{n,s_2}(t)}{t} \] 
at $t=m_i$ where $m_i$ are the singularities of this 
function and satisfies $\lim_{x \rightarrow 0}m_i(x) =0$.
Although this method is sound, the residues are very difficult to compute even with the use of computer. Alternatively, 
the second method by fitting the data into certain mathematical expression is much simpler 
and more practical. By Theorem 4.2(2) in the texbook written by Kauers and Paule \cite[p.71]{KP},
we know that the sequence $w_{n}(k,s_1,s_2)$ in $k$, as a point-wise product 
of two C-finite sequences, is a C-finite sequence. The order of the recurrence relation is bounded by the product of the order of $B_n$ and the order of $C_n$. 
Hence, by Theorem \ref{Concept2}, 
we interpolate the rational generating function 
$W_{n,s_1,s_2}(x)$ such that the degree of the 
denominator is bounded by this number. We state it as a theorem.

\begin{thm}
For any $n$, the winning probability generating functions $W_{n,s_1,s_2}(x)$ of the pile game with boundary for two players are all rational functions
with the same denominator of degree at most $n(n+1)$.  
Equivalently, the terms $w_n(k,s_1,s_2)$ as a sequence in $k$ 
form a C-finite sequence of order at most $n(n+1)$;  
and the recurrence relations are the same for each value of $s_1$ and $s_2$ 
(with different initial values, of course).
\end{thm}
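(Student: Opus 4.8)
The plan is to reduce the claim about $W_{n,s_1,s_2}(x)$ to the two conceptual facts already established: first, that for fixed $n$ and $s$ the sequence $B_n(k,s)$ in $k$ is C-finite with generating function $G_{n,s}(x)$ of denominator degree at most $n$ (Theorem \ref{Concept1} and Corollary \ref{Denom}); and second, that the partial sums $C_n(k,s)$ are governed by $H_{n,s}(x) = (1-G_{n,s}(x))/(1-x)$, so $C_n(k,s)$ in $k$ is also C-finite, with generating function of denominator degree at most $n+1$ (the extra factor $1-x$). The identity $w_n(k,s_1,s_2) = B_n(k,s_1)\cdot C_n(k-1,s_2)$ expresses $w_n(\,\cdot\,,s_1,s_2)$ as the Hadamard (pointwise) product of the shifted sequence attached to $xH_{n,s_2}(x)$ and the sequence attached to $G_{n,s_1}(x)$.

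First I would invoke the closure of the C-finite ansatz under Hadamard products, as in \cite[Theorem 4.2(2)]{KP}: the pointwise product of a C-finite sequence of order $d_1$ and one of order $d_2$ is C-finite of order at most $d_1 d_2$. Here $B_n(\,\cdot\,,s_1)$ has order at most $n$ (the degree of $Q_n(x)$), and $C_n(\,\cdot\,,s_2)$ has order at most $n+1$. The single-turn shift in $C_n(k-1,s_2)$ does not change the order of the underlying recurrence (it only adjusts initial values, and shifting corresponds to multiplication of the generating function by $x$, which leaves the denominator unchanged apart from the already-counted factor $1-x$). Hence $w_n(\,\cdot\,,s_1,s_2)$ is C-finite of order at most $n(n+1)$, and equivalently $W_{n,s_1,s_2}(x)$ is rational with denominator degree at most $n(n+1)$. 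By Theorem \ref{Concept2} (or directly from rationality of the pieces), the coefficients are honest rational numbers when $p,q$ are.

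The remaining point is the uniformity claim: that the denominator is the \emph{same} for all choices of $s_1$ and $s_2$. For this I would argue that the Hadamard product construction produces a denominator that depends only on the denominators of the two input generating functions, not on their numerators: concretely, if $Q(x) = \prod(1-\alpha_i x)$ and $\tilde Q(x) = \prod(1-\beta_j x)$ are the (fixed, $s$-independent) denominators of $G_{n,\bullet}$ and of $xH_{n,\bullet}$, then the Hadamard product has denominator dividing $\prod_{i,j}(1-\alpha_i\beta_j x)$, which is manifestly independent of $s_1,s_2$. Only the numerator — equivalently, the initial values of the recurrence — varies with $s_1,s_2$. Since $Q_n(x)$ from Corollary \ref{Denom} is independent of $s$, and $(1-x)Q_n(x)$ (the denominator of $xH_{n,s}$) likewise, the common denominator of all the $W_{n,s_1,s_2}(x)$ can be taken to be the fixed polynomial $\prod_{i,j}(1-\alpha_i\beta_j x)$ of degree at most $n(n+1)$, with $\alpha_i$ the reciprocal roots of $Q_n$ and $\beta_j$ those of $(1-x)Q_n$; this finishes the proof.

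I expect the main obstacle to be bookkeeping the degree bound cleanly rather than any conceptual difficulty: one must be careful that the factor $1-x$ in $H_{n,s}$ is counted once and only once, that the shift $k\mapsto k-1$ is handled without inflating the order, and that the Hadamard-product bound $n\cdot(n+1)$ is stated for the denominator in lowest terms (it is an upper bound — cancellation may occur, e.g. when $G_{n,s_1}$ itself has degree strictly below $n$ for small $s_1$, but the bound still holds). A secondary subtlety worth a sentence is justifying the equivalence between "$G_{n,s}$ rational with given denominator" and "$B_n(k,s)$ C-finite with given recurrence," which is exactly the content of equation \eqref{Q} and is used in both directions.
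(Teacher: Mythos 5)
Your proposal is correct and follows essentially the same route as the paper: bound the order of $B_n(k,s_1)$ by $n$ via Theorem \ref{Concept1}, bound the order of the partial-sum sequence $C_n(k,s_2)$ by $n+1$, and invoke closure of C-finite sequences under Hadamard products (Kauers--Paule, Theorem 4.2) to get order at most $n(n+1)$, translating back to the denominator via equation \eqref{Q}. Your extra care about the shift $k\mapsto k-1$ and the $s_1,s_2$-independence of the common denominator $\prod_{i,j}(1-\alpha_i\beta_j x)$ is a welcome refinement of a point the paper's proof leaves implicit, but it is not a different argument.
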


\begin{proof}
By Theorem \ref{Concept1}, the order of $B_{n}(k,s_1)$ 
is at most $n$ for $0 \leq s_1 < n$. Also, by Theorem 4.2(3) in the textbook written by Kauers and Paule \cite[p. 71]{KP}, 
$C_n(k,s) = 1- \sum_{i=0}^k B_n(i,s)$ is of order at most $n+1$.
The order of $w_{n}(k,s_1,s_2) = B_{n}(k,s_1) \cdot C_n(k-1,s_2)$ 
is therefore bounded by $n(n+1)$ by the same theorem.
The statement about the generating function $W_{n, s_1, s_2}(x)$ is true by the equation \eqref{Q}.
\end{proof}

For example, let $p=q=1/2$. The rational functions $W_{n,s_1,s_2}(x)$ for some $n$, $s_1$ and $s_2$ are listed as follows:
\begin{align*}
  W_{1,0,0}(x) &= \dfrac{2x}{4-x}, \\   
  W_{2,0,0}(x) &= \dfrac{2x^2(8-x)}{(x+4)(x^2-12x+16)},\\
  W_{3,0,0}(x) &= \dfrac{ -16x^3(x^2+10x-32)}{(x^3-24x^2+80x-64)(x^3+4x^2-32x-64)}.      
\end{align*}

Please see the output file for the rational functions $W_{n,0,0}(x)$, for $1\leq n \leq 12$. 
Unlike Corollary \ref{Denom} in Section \ref{One}, 
the denominators of  $W_{n,0,0}(x)$ do not seem to have any nice relation in $n$ (but this may be too much to ask for).
Also, we look at the \textit{less complex} sequence of the winning probability of the 
first player, $\bar{w}(n) := W_{n,0,0}(1)$ for $1\leq n \leq 15$. We list their values as follows:
\[ \dfrac{2}{3}, \dfrac{14}{25}, \dfrac{48}{91}, \dfrac{8752}{16983}, \dfrac{54402}{106711}, 
\dfrac{31234254}{61625005}, \dfrac{13536472628672}{26801524306425}, 
\dfrac{22605208036288}{44860083149401}, \dfrac{604879643045690}{1202291036053231}, \]
\[ \dfrac{20845854034408931883490}{41481744738197592962679}, 
\dfrac{10267708004766652604842480}{20449419753638432679768691}, 
\dfrac{140038998291791920344242305648}{279087176779522234199477629375},\] \[\dfrac{161633807302263156492382071431905658}{322288621844885789730727320061228179}, \dfrac{2354603411248560275753127681958620368325422}{4696859311668431636549510834091264555161333},\] \[ \dfrac{389133153474651298255081635215667799647913216}{776482226164826584565197354645471592297050103}.    \]

This sequence does not seem to satisfy any linear recurrence relation.
If this observation were true, it would be the opposite circumstance
to the other version of pile games in which the player
can collect any negative number of chips, as discussed in the papers \cite{TZ, LT}. In such cases, the sequence $\bar{w}(n)$  is \textit{holonomic} of small order (see 
eq.(10) \cite[p.9]{LT} 
and Proposition 7 in \cite[p.19]{TZ}). We post this observation as a conjecture.
 
\begin{conj}
The winning probability of the first player 
$\bar{w}(n)$ is not a holonomic sequence,
i.e., it cannot be written as a linear 
recurrence relation with polynomial coefficients as follows:
\[ p_0(n)\bar{w}(n)+p_1(n)\bar{w}(n-1)+\dots+p_N(n)\bar{w}(n-N)=0\]
for some specific $N$ and some polynomials $p_0(n), p_1(n), \dots, p_N(n)$.
\end{conj}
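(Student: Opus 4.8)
The statement to establish is that $\bar w(n)=W_{n,0,0}(1)$ is not holonomic. I should say at the outset that I regard this as genuinely hard — hence its appearance as a conjecture — but the plan below at least isolates where the difficulty lies. Since holonomicity of the sequence is equivalent to $D$-finiteness of $F(x):=\sum_{n\ge1}\bar w(n)x^n$, the plan is to exhibit one of the two classical obstructions to $P$-recursiveness of a $\mathbb{Q}$-valued sequence. The first is \emph{denominator growth}: if $p_0(n)\bar w(n)=-\sum_{i\ge1}p_i(n)\bar w(n-i)$ with $p_i\in\mathbb{Z}[n]$, then unrolling the recurrence shows $\mathrm{denom}(\bar w(n))$ divides $C\cdot\prod_{k\le n}p_0(k)$ for a fixed constant $C$, so $\log\mathrm{denom}(\bar w(n))=O(n\log n)$ and every prime factor of $\mathrm{denom}(\bar w(n))$ is $O(n^{\deg p_0})$. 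The second is the \emph{asymptotic} obstruction: a $P$-recursive sequence is asymptotically a finite $\mathbb{C}$-combination of terms $n^{\alpha}(\log n)^m\rho^n$ with $\alpha\in\mathbb{Q}$ and $\rho$ algebraic. I would go after the denominator obstruction, since the data favour it: already at $n=7$ the reduced denominator of $\bar w(n)$ exceeds $2\times10^{13}$, its largest prime factor appears to grow roughly geometrically in $n$, and $\log\mathrm{denom}(\bar w(n))$ looks like $\Theta(n^2)$ — all incompatible with the bounds above; whereas $n^2(\bar w(n)-\tfrac12)$ seems to converge, so the leading asymptotics by themselves do not rule out $P$-recursiveness.

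To get a usable handle on $\bar w(n)$, specialize to $p=q=\tfrac12$. By Corollary~\ref{Denom}, $Q_n(x)=Q_{n-1}(x)-\tfrac14 x^2Q_{n-2}(x)$; setting $\hat Q_n(x):=2^nQ_n(x)$ yields the integer recurrence $\hat Q_n=2\hat Q_{n-1}-x^2\hat Q_{n-2}$ with $\hat Q_0=1$, $\hat Q_1=2-x$, and since the numerator of $G_{n,0}(x)$ equals $(px)^n=(x/2)^n$ one gets the clean identity $G_{n,0}(x)=x^n/\hat Q_n(x)$. Solving the recurrence gives $\hat Q_n(x)=A(x)\bigl(1+\sqrt{1-x^2}\bigr)^n+B(x)\bigl(1-\sqrt{1-x^2}\bigr)^n$, from which a short computation shows the roots of $\hat Q_n$ are exactly $\sec\!\bigl(\tfrac{(2k+1)\pi}{2n+1}\bigr)$, $0\le k\le n-1$. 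Because $w_n(k,0,0)=B_n(k,0)\,C_n(k-1,0)$ is the coefficientwise product of $G_{n,0}(x)$ and $xH_{n,0}(x)$, where $H_{n,0}=(1-G_{n,0})/(1-x)=R_n/\hat Q_n$ with $R_n:=(\hat Q_n-x^n)/(1-x)\in\mathbb{Z}[x]$, the residue representation of $W_{n,0,0}$ from Section~\ref{Two}, evaluated at $x=1$, gives
\[
  \bar w(n)=\sum_{k=0}^{n-1}\frac{\rho_k^{\,n-1}\,\tilde R_n(\rho_k)}{\hat Q_n(\rho_k)\,\tilde Q_n'(\rho_k)},\qquad \rho_k:=\cos\!\Bigl(\tfrac{(2k+1)\pi}{2n+1}\Bigr),
\]
where $\tilde Q_n,\tilde R_n$ denote the reciprocal polynomials of $\hat Q_n,R_n$. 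Thus $\bar w(n)$ is an explicit symmetric function of the Chebyshev nodes modulo $2n+1$, lying in the maximal real subfield of $\mathbb{Q}(\zeta_{2(2n+1)})$.

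From this formula, $\mathrm{denom}(\bar w(n))$ divides, up to powers of $2$, an explicit norm/resultant expression built from $\hat Q_n$, its reciprocal $\tilde Q_n$, and the discriminant of the minimal polynomial of $\cos\tfrac{\pi}{2n+1}$ — in particular a divisor of $\mathrm{Res}(\hat Q_n,\tilde Q_n)\cdot\mathrm{disc}(\hat Q_n)$. The concrete target is then: prove that the \emph{reduced} denominator of $\bar w(n)$ has logarithm growing faster than $n\log n$ (equivalently, has a prime factor growing faster than every polynomial in $n$), and conclude via the denominator obstruction. I would attack this by fixing, for each $n$ with $2n+1$ prime, a well-chosen prime $p=p(n)$ — for instance one modulo which $\hat Q_n$ and its reciprocal $\tilde Q_n$ share a root, i.e.\ $\hat Q_n$ has a reciprocal pair of roots mod $p$ — and bounding $v_p(\bar w(n))$ from below using the explicit shape of $\hat Q_n\bmod p$ (controlled by the multiplicative order of $2$ mod $p$ and by Chebyshev reduction identities) together with an argument that the numerator contribution cannot cancel this valuation. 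One could alternatively try to show that $F(x)$ has the unit circle as a natural boundary, which would follow if its singularities accumulate densely there; this seems no easier. The main obstacle, in either form, is the same: converting the very strong numerical evidence of huge, badly factoring denominators into a proof requires real control over the arithmetic of resultants of Chebyshev-like polynomials with their own reciprocals — something for which no soft closure-property or analytic argument appears to be available, which is exactly why the statement is posed as a conjecture rather than a theorem.
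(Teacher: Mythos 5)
The statement you are addressing is posed in the paper as a \emph{conjecture}: the authors offer no proof, only the numerical observation that the fifteen listed values of $\bar{w}(n)$ do not appear to satisfy any linear recurrence with polynomial coefficients. So there is no proof in the paper to compare yours against, and your own text is candid that what you have written is a plan rather than a proof. The verifiable parts of your setup do check out: with $p=q=\tfrac12$, Corollary \ref{Denom} gives $\hat Q_n=2\hat Q_{n-1}-x^2\hat Q_{n-2}$ for $\hat Q_n:=2^nQ_n$, the identity $G_{n,0}(x)=x^n/\hat Q_n(x)$ follows from the numerator being $(px)^n$, and the secant root formula is correct (e.g.\ $\hat Q_2=4-2x-x^2$ has roots $-1\pm\sqrt5=\sec(\pi/5),\sec(3\pi/5)$). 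The denominator obstruction you invoke — that a $\mathbb{Q}$-valued $P$-recursive sequence has $\log\mathrm{denom}(\bar w(n))=O(n\log n)$ and only polynomially large prime factors in its reduced denominators — is a standard and correct criterion, and it is well matched to the data, whose denominators visibly explode.

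The gap is that the entire content of the theorem is concentrated in the one step you defer: proving an unconditional lower bound on the \emph{reduced} denominator of $\bar w(n)$. Your residue formula expresses $\bar w(n)$ as a sum of $n$ algebraic terms whose common denominator involves $\mathrm{Res}(\hat Q_n,\tilde Q_n)$ and $\mathrm{disc}(\hat Q_n)$; that bounds the denominator from \emph{above}, which is the wrong direction. To apply the obstruction you must exhibit, for infinitely many $n$, a prime $p(n)$ growing faster than every polynomial in $n$ with $v_{p(n)}(\bar w(n))<0$, and this requires ruling out cancellation between the numerator and the resultant — precisely the part for which you offer only a proposed line of attack (choosing $p$ so that $\hat Q_n$ and $\tilde Q_n$ share a root mod $p$) and no argument that the numerator's $p$-adic valuation stays at zero. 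Until that is done, nothing is proved; the statement remains a conjecture, as both you and the paper acknowledge. As a secondary caution, your statement of the asymptotic obstruction omits the $e^{P(n^{1/r})}$ factors allowed in the Birkhoff--Trjitzinsky expansions of $P$-recursive sequences, so if you ever fall back on that route the criterion needs to be stated more carefully.
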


\subsection{Average Number of Turns and Higher Moments}
Some readers may notice that we did not mention our favorite trick
that was used in the papers \cite{LT, TZ, WX},
\[ \bar{w}(n) = \dfrac{1}{2}+\dfrac{1}{2}\sum_{k=0}^{\infty} B_n(k,0)^2.\]
Instead, we choose to find the generating functions. 
The advantage of the generating-function approach is that the generating function contains more information for deriving other statistics related to the problem. 
We list statistics of some random variables in this section. In general, we think that (as a consequence of the values of $\bar{w}(n)$)
there is no more interesting pattern to talk about. 

Let $Y_{n}$ be the random variable of the number of rounds required until
the first player wins the game by collecting $n$ chips first.
Both players start with no chip. In each turn, the probabilities of adding one chip to the pile or removing one chip from the pile are both $1/2$.
If the player has nothing in his pile and chooses to remove one chip in the next turn, then the number of chips in his pile will still be zero. 

For $n=1$,  the sequence of the straight moment, $E[Y_1^r], \;\ r=0,1,2,3,\dots$ is
\[  1, \dfrac{4}{3}, \dfrac{20}{9}, \dfrac{44}{9}, \dfrac{380}{27}, 
\dfrac{4108}{81}, \dfrac{17780}{81}, \dfrac{269348}{243}, 
\dfrac{4663060}{729}, \dfrac{10091044}{243}, 
\dfrac{218374420}{729}, \dots   \]

The sequence of the moment about the mean, $E[(Y_1-E[Y_1])^r], \;\ r=1,2,3,\dots$ is
\[0, \dfrac{4}{9}, \dfrac{20}{27}, \dfrac{20}{9}, \dfrac{1940}{243}, 
\dfrac{25204}{729}, \dfrac{14140}{81}, \dfrac{6609460}{6561}, 
\dfrac{128728340}{19683}, \dfrac{103175452}{2187}, \dots\]

On the other hand, let $Z_n$ be the random variable of the number of turns required to end the game, i.e., either the first player wins (by collecting $n$ chips first) or the second player wins. 
We note that each player's play is counted as one turn; and one round has two turns. 

For $n=1$,  the sequence of the straight moment, $E[Z_1^r], \;\ r=0,1,2,3,\dots$ is
\[ 1, 2, 6, 26, 150, 1082, 9366, 94586, 1091670, 14174522, 204495126, \dots   \]

The sequence of the moment about the mean, $E[(Z_1-E[Z_1])^r], \;\ r=1,2,3,\dots$ is
\[0, 2, 6, 38, 270, 2342, 23646, 272918, 3543630, 51123782, \dots\]

Please see the output files for the moments of $Y_n$ and $Z_n$ for other values of $n$. 


\section{The case $R = \{1, -u \}$ for one player, where $ u=1,2,3,\dots$} \label{section4}

In this section, we look at the set of choices $R=\{1,-u\}$ such that the probability of adding one chip to the pile is $p$; and the probability of removing $u$ chips, $u\geq 1$, is $q=1-p$. We consider it as a pile game with boundary.

\subsection{Probability Generating functions, $G_{n,s}(x)$}

We recall the definition of the generating function $G_{n,s}(x)$ first:
\[G_{n,s}(x) := \sum_{k=0}^{\infty} B_{n}(k,s)x^k\]
where $B_n(k,s)$ is the probability that the game ends
(by collecting $n$ chips for the first time)
at the $k^{\text{th}}$ turn while starting with $s$ chips. 

For the recurrence relation of $G_{n,s}(x)$ as a sequence in $n$,
we start with the relation
\[G_{n,n-1}(x) = qx{G_{n,n-u-1}(x)}+px.    \]
Then, we combine this relation with the same combinatorial 
relation that was mentioned in Section \ref{section2.1},
\begin{equation}  \label{Rela3}
G_{n,s}(x) = G_{n,m}(x) \cdot G_{m,s}(x), \;\ \;\  0 \leq s \leq m \leq n. 
\end{equation}
By the same approach as in Section \ref{section2.1}, the functions $G_{n,s}(x)$ satisfy another the relation in $n$ as follows:
\begin{equation}  \label{Rela4}
  \dfrac{1}{G_{n,s}(x)} = \dfrac{1}{pxG_{n-1,s}(x)}-\dfrac{q}{pG_{n-u-1,s}(x)}. 
\end{equation}

Then again, for any $n$ and $s, \;\ 0 \leq s \leq n,$
we generate $G_{n,s}(x)$ by applying the
relation \eqref{Rela4} when $s=0$  and the relation \eqref{Rela3} in the following form: 
$G_{n,s}(x) = \dfrac{G_{n,0}(x)}{ G_{s,0}(x)}$ when $s>0$;
along with trivial initial value $G_{0,0}(x)=1.$ \\

It is important to stress that the denominators of the functions 
$G_{n,s}(x)$ are similar to those in Corollary \ref{Denom}. We state it as a corollary here.

\begin{cor} 
For any $n$ and $s,$ $0\leq s \leq n$ and the choice set 
$R = \{1,-u\}, u \geq 1,$
let $G_{n,s}(x) = \dfrac{P_{n,s}(x)}{Q_n(x)} $. Then
$Q_n(x)$ satisfies the recurrence relation,
\[  Q_n(x) = Q_{n-1}(x)-qp^ux^{u+1}Q_{n-u-1}(x).\] 
\end{cor}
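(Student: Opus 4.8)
The plan is to follow the proof of Corollary~\ref{Denom} almost verbatim; the only new feature is that the base of the induction now spans $0\le n\le u$ instead of $n\le 1$. Everything reduces to showing that the numerator of $G_{n,0}(x)$ is exactly $(px)^n$: once $G_{n,0}(x)=(px)^n/Q_n(x)$ is known, the $n$-recurrence \eqref{Rela4} at $s=0$ yields the recurrence for $Q_n(x)$ after clearing denominators, and the combinatorial relation \eqref{Rela3} propagates the conclusion to every $s$.

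First I would treat the base range $1\le n\le u$ (the case $n=0$ being trivial, $G_{0,0}(x)=1=Q_0(x)$), where \eqref{Rela4} cannot be used since it refers to $G_{n-u-1,0}(x)$ with a negative index. In this range every non-absorbing state $s$ satisfies $s\le n-1<u$, so from each such state the move ``$-u$'' necessarily returns the player to $0$; the defining linear system for the functions $G_{n,s}(x)$ is then triangular (back-substitute from $s=n-1$ downward) and solves to
\[ G_{n,0}(x)=\frac{(1-px)(px)^n}{1-x+qp^nx^{n+1}}. \]
Since $x=1/p$ is a root of $1-x+qp^nx^{n+1}$ (here $p+q=1$), the factor $1-px$ divides it, and hence $G_{n,0}(x)=(px)^n/Q_n(x)$ with $Q_n(x):=(1-x+qp^nx^{n+1})/(1-px)$, a polynomial of degree $n$ satisfying $Q_n(0)=1$. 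These polynomials $Q_0(x),\dots,Q_u(x)$ are the initial data of the recurrence; the corollary statement leaves them implicit, just as $Q_0,Q_1$ were recorded explicitly in Corollary~\ref{Denom}.

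For $n\ge u+1$ I would induct on $n$. Assuming $G_{m,0}(x)=(px)^m/Q_m(x)$ for all $m<n$, substitute the forms of $G_{n-1,0}(x)$ and $G_{n-u-1,0}(x)$ into \eqref{Rela4} with $s=0$ and clear denominators:
\[ \frac{1}{G_{n,0}(x)}=\frac{Q_{n-1}(x)}{(px)^n}-\frac{q\,Q_{n-u-1}(x)}{p^{n-u}x^{n-u-1}}=\frac{Q_{n-1}(x)-qp^{u}x^{u+1}Q_{n-u-1}(x)}{(px)^n}. \]
Thus $G_{n,0}(x)=(px)^n/Q_n(x)$ with $Q_n(x)=Q_{n-1}(x)-qp^{u}x^{u+1}Q_{n-u-1}(x)$, which is the asserted recurrence; since $Q_n(0)=1\neq0$ there is no spurious cancellation. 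Finally, for $s>0$ the relation \eqref{Rela3} in the form $G_{n,s}(x)=G_{n,0}(x)/G_{s,0}(x)$ gives $G_{n,s}(x)=(px)^{n-s}Q_s(x)/Q_n(x)$, so every $G_{n,s}(x)$, $0\le s\le n$, carries the common denominator $Q_n(x)$, which completes the argument.

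The only step carrying real content is the base range $1\le n\le u$: one must honestly solve the triangular boundary system and verify the divisibility $(1-px)\mid(1-x+qp^nx^{n+1})$, so that $Q_n$ is a genuine polynomial and the recurrence is correctly seeded. Everything after that is the same bookkeeping as in the $u=1$ case of Corollary~\ref{Denom}, driven by the identity that the numerator of $G_{n,0}(x)$ is $(px)^n$.
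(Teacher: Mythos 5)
Your proof is correct and follows exactly the route the paper intends: the paper gives no explicit proof of this corollary, merely pointing back to the justification of Corollary~\ref{Denom} (numerator of $G_{n,0}(x)$ equals $(px)^n$, then read off the denominator recurrence from \eqref{Rela4} at $s=0$ and propagate to $s>0$ via \eqref{Rela3}). Your only addition is the explicit computation of the seed polynomials $Q_0,\dots,Q_u$ via the triangular boundary system and the divisibility $(1-px)\mid(1-x+qp^nx^{n+1})$, which the paper leaves implicit; this is a worthwhile clarification but not a different method.
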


\subsection{Average Numbers of Turns and Higher Moments} 

Similar to Section \ref{TwoTwo}, let $X_{n,s}$ be the random variable of the number of turns required to end the game, i.e., the player collects $n$ chips or more 
for the first time while starting with $s$ chips.  

We start with the expected number of this random
variable for different values of $n$ and $s$. It is clear that the bigger the value of $u$, 
the higher the number of $E[X_{n,s}]$.
We first consider the case where $p=1/2$ and $q=1/2$. This case provides nice solutions to various statistics we considered. The solutions
are collected in the proposition below. 

To verify that the following proposition is correct, 
we only need to check the following recurrence relations
in $n$ and $s$ which can be obtained in essentially the same way 
to those derived in the proof of Proposition \ref{Prop1}. For $n \geq 1$, we have 
\begin{align*}
 E[X_{n,s}] &= 1+\dfrac{1}{2} E[X_{n,s+1}]+\dfrac{1}{2} E[X_{n,0}], \;\ \;\ 0 \leq s < u,  \\
 E[X_{n,s}] &= 1+\dfrac{1}{2} E[X_{n,s+1}]+\dfrac{1}{2} E[X_{n,s-u}], \;\ \;\ u \leq s < n, \\
 E[X_{n,n}] &= 0 , \\
  \dfrac{1}{2}E[X_{n,s}] +  \dfrac{1}{2}E[X_{n-u-1,s}] &= 1+ E[X_{n-1,s}].   
\end{align*}

\begin{prop}Let $p=q=1/2$. We have the following formula for $E[X_{n,s}]$:
\[  E[X_{n,s}] = 2[C_u(n)-C_u(s)], \;\ \;\ 0\leq s \leq n, \]
where
\begin{align*}     C_u(n) &=  2C_u(n-1)+1-C_u(n-u-1) \text{ and }
C_u(0)=C_u(-1)=\dots=C_u(-u)=0.   \end{align*}
\end{prop}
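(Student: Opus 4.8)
The plan is to verify the claimed closed form $E[X_{n,s}] = 2[C_u(n) - C_u(s)]$ by checking that the right-hand side satisfies the same defining relations as $E[X_{n,s}]$, exactly as was done in the proof of Proposition \ref{Prop1}. The recurrences to match are the four displayed relations immediately preceding the proposition: the two $s$-recurrences (one for $0 \leq s < u$, one for $u \leq s < n$), the boundary condition $E[X_{n,n}] = 0$, and the $n$-recurrence $\tfrac12 E[X_{n,s}] + \tfrac12 E[X_{n-u-1,s}] = 1 + E[X_{n-1,s}]$. Since these relations, together with appropriate initial data, pin down $E[X_{n,s}]$ uniquely, it suffices to confirm the proposed formula obeys all of them.

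First I would handle the boundary: $E[X_{n,n}] = 2[C_u(n) - C_u(n)] = 0$, which is immediate. Next, the $n$-recurrence: substituting the formula, the $C_u(s)$ terms contribute $-C_u(s) - C_u(s) = -2C_u(s)$ on the left and $-2C_u(s)$ on the right (after accounting for the coefficients $\tfrac12, \tfrac12, 1$), so they cancel out of the identity entirely, and what remains is precisely $\tfrac12 \cdot 2 C_u(n) + \tfrac12 \cdot 2 C_u(n-u-1) = 1 + 2 C_u(n-1)$, i.e.\ $C_u(n) + C_u(n-u-1) = 1 + 2C_u(n-1)$, which is a rearrangement of the defining recurrence $C_u(n) = 2C_u(n-1) + 1 - C_u(n-u-1)$. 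Then the two $s$-recurrences: for $u \leq s < n$, plugging in the formula reduces the claim (after the $C_u(n)$ terms cancel against each other on both sides) to $C_u(s) = \tfrac12 C_u(s+1) + \tfrac12 C_u(s-u) - \text{(something)}$; more carefully, $2[C_u(n) - C_u(s)] = 1 + \tfrac12 \cdot 2[C_u(n) - C_u(s+1)] + \tfrac12 \cdot 2[C_u(n) - C_u(s-u)]$ collapses to $-2C_u(s) = 1 - C_u(s+1) - C_u(s-u)$, i.e.\ $C_u(s+1) = 2C_u(s) + 1 - C_u(s-u)$, which is again the defining recurrence for $C_u$ with argument shifted to $s+1$. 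The case $0 \leq s < u$ is analogous but uses $E[X_{n,0}] = 2[C_u(n) - C_u(0)] = 2C_u(n)$ (since $C_u(0) = 0$) in place of $E[X_{n,s-u}]$: the identity becomes $C_u(s+1) = 2C_u(s) + 1 - C_u(0) = 2C_u(s) + 1$, which matches the defining recurrence precisely because $C_u(s-u) = 0$ for $s - u \in \{-u, \dots, -1\}$, i.e.\ the boundary values of $C_u$ were chosen exactly so that the ``negative pile'' truncation in the game corresponds to the truncation in the $C_u$ recurrence. Finally I would note that the initial values agree: $E[X_{0,0}] = 2[C_u(0) - C_u(0)] = 0$, consistent with the game starting already finished when $n = 0$.

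The only genuinely delicate point — and the one I would write out with care — is the bookkeeping in the $0 \leq s < u$ regime, where one must check that the shift $C_u(s-u) = 0$ (valid since $-u \leq s - u \leq -1$) is exactly what makes the $s$-recurrence for $C_u$ reproduce the truncated game recurrence $E[X_{n,s}] = 1 + \tfrac12 E[X_{n,s+1}] + \tfrac12 E[X_{n,0}]$; this is where the specific choice $C_u(0) = C_u(-1) = \dots = C_u(-u) = 0$ is used in an essential way. Everything else is routine algebraic cancellation. I would close by remarking that uniqueness of the solution to this system of recurrences (forward in $n$ from $C_u(0) = 0$ and across $s$ from the boundary $s = n$) guarantees that matching all the relations is enough to conclude the formula holds for all $0 \leq s \leq n$.
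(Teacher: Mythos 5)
Your proposal is correct and takes exactly the approach the paper intends: the paper's "proof" consists precisely of listing the four recurrences in $s$ and $n$ (derived as in Proposition \ref{Prop1}) and asserting that the closed form satisfies them, and your verification of each — including the key observation that the boundary values $C_u(0)=C_u(-1)=\dots=C_u(-u)=0$ are what make the $0\leq s<u$ case collapse to the defining recurrence — checks out. If anything, you have written out the algebra more explicitly than the paper does.
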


For the general case of $p$ and $q=1-p$,
we have the following recurrence relations of $E[X_{n,s}]$:
\begin{align}  
(S-1)^2(pS^u-q(S^{u-1}+S^{u-2}+\dots+1))E[X_{n,s}] &= 0,  \label{Pattern:a} \\
(N-1)^2(pN^u-q(N^{u-1}+N^{u-2}+\dots+1))E[X_{n,s}] &= 0. \label{Pattern:b}
\end{align}

For example, if $u=2$, $p=2/3$ and $q=1/3$, 
we have the following relations.
\[   (S-1)^3(2S+1)E[X_{n,s}]   = 0,\]
\[  (N-1)^3(2N+1)E[X_{n,s}]   = 0.\]
Based on these relations, we derive the closed-form formula for $E[X_{n,s}]$ for the general case of $p$ and $q$ algebraically as follows:

\begin{prop} \label{prop12}
Let $u=2$, $p=2/3$ and $q=1/3$. We have the following formula for $E[X_{n,s}]$:
\[  E[X_{n,s}] = \dfrac{n(3n+5)}{6} - \dfrac{1}{9}\left(\frac{-1}{2}\right)^n
-\left[\dfrac{s(3s+5)}{6}-\dfrac{1}{9}\left(\frac{-1}{2}\right)^s \right].\]
\end{prop}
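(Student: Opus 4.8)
The plan is to verify the claimed closed form directly by checking that it satisfies the two recurrence relations \eqref{Pattern:a} and \eqref{Pattern:b} (specialized to $u=2$, $p=2/3$, $q=1/3$, i.e. $(S-1)^3(2S+1)E[X_{n,s}]=0$ and $(N-1)^3(2N+1)E[X_{n,s}]=0$) together with enough initial/boundary conditions to pin the solution down uniquely. Write $f(n) := \dfrac{n(3n+5)}{6} - \dfrac{1}{9}\left(\frac{-1}{2}\right)^n$, so that the proposed formula is $E[X_{n,s}] = f(n) - f(s)$. Since the operators in $n$ and in $s$ act independently and the formula is a difference of a function of $n$ and a function of $s$, it suffices to show that $f$ itself is annihilated by $(E-1)^3(2E+1)$, where $E$ is the shift $g(m)\mapsto g(m+1)$; this single computation then handles both \eqref{Pattern:a} and \eqref{Pattern:b} at once.

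First I would factor the annihilator: $(E-1)^3(2E+1)$ has characteristic roots $1$ (multiplicity $3$) and $-1/2$ (multiplicity $1$), so its general solution space is spanned by $1$, $m$, $m^2$, and $(-1/2)^m$. The polynomial part $\frac{m(3m+5)}{6} = \frac{1}{2}m^2 + \frac{5}{6}m$ is a degree-$2$ polynomial, hence lies in the span of $\{1,m,m^2\}$ and is killed by $(E-1)^3$; the exponential part $-\frac19(-1/2)^m$ is killed by $(2E+1)$. Therefore $f(m)$ is annihilated by the full operator $(E-1)^3(2E+1)$, and consequently so is $E[X_{n,s}]=f(n)-f(s)$ in each of the variables $n$ and $s$. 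This is the bulk of the verification and is entirely routine.

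Next I would confirm that the recurrences in \eqref{Pattern:a}, \eqref{Pattern:b}, being order $4$, require four initial conditions in each variable to determine the solution; I would supply these from the system listed just before the proposition, namely $E[X_{n,n}]=0$ (which forces $f(n)-f(s)$ to vanish on the diagonal — immediate since it is a difference of the same function), together with the base relations $E[X_{n,s}] = 1+\frac12 E[X_{n,s+1}]+\frac12 E[X_{n,0}]$ for $0\le s<u=2$ and $E[X_{n,s}] = 1+\frac12 E[X_{n,s+1}]+\frac12 E[X_{n,s-2}]$ for $2\le s<n$, and the $n$-recurrence $\frac12 E[X_{n,s}]+\frac12 E[X_{n-3,s}] = 1+E[X_{n-1,s}]$. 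I would plug $f(n)-f(s)$ into each of these and check equality; the only genuinely non-mechanical point is bookkeeping the small cases $s=0,1$ where the boundary ``reflects'' at $0$, so one should verify the formula for, say, $E[X_{0,0}], E[X_{1,0}], E[X_{1,1}], E[X_{2,0}], E[X_{2,1}], E[X_{2,2}]$ by hand or by the defining system and match them against $f(n)-f(s)$, then invoke the recurrences to propagate.

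The main obstacle is not any deep step but rather making sure the recurrence data genuinely determines the sequence: one must check that \eqref{Pattern:a} and \eqref{Pattern:b} together with the finitely many boundary values actually have a unique solution, so that matching the closed form to both the recurrences and the boundary values is a complete proof. In particular the two recurrence relations in $n$ and $s$ overlap with the triangular boundary $0\le s\le n$, and one should argue (as in the proof of Proposition \ref{Prop1}) that the combined system propagates from the diagonal and the column $s=0$ to fill the whole region. Once that is in place, the verification reduces to the elementary observations above about the characteristic roots of $(E-1)^3(2E+1)$, and the proof is complete.
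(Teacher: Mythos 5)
Your core computation — that $f(m)=\frac{m(3m+5)}{6}-\frac19\left(\frac{-1}{2}\right)^m$ is annihilated by $(E-1)^3(2E+1)$ because the polynomial part has degree $2$ and $(-1/2)$ is a root of $2E+1$ — is correct and is exactly the route the paper takes (the paper gives no more detail than ``derived algebraically from these relations''). However, there is a concrete error in the part of your argument that pins the solution down: the boundary system you propose to check is copied verbatim from the $p=q=\frac12$ display preceding the general discussion, with coefficients $\frac12$, whereas Proposition \ref{prop12} has $p=\frac23$, $q=\frac13$. The correct relations are
\begin{align*}
E[X_{n,s}] &= 1+\tfrac23 E[X_{n,s+1}]+\tfrac13 E[X_{n,0}], \quad 0\le s<2,\\
E[X_{n,s}] &= 1+\tfrac23 E[X_{n,s+1}]+\tfrac13 E[X_{n,s-2}], \quad 2\le s<n,\\
\tfrac23 E[X_{n,s}]+\tfrac13 E[X_{n-3,s}] &= 1+E[X_{n-1,s}],
\end{align*}
and the claimed closed form satisfies these but \emph{not} the $\frac12$-weighted versions you wrote. (For instance $\frac12 f(3)+\frac12 f(0)=\frac{497}{144}\ne 1+f(2)=\frac{334}{72}$, while $\frac23 f(3)+\frac13 f(0)=\frac{334}{72}=1+f(2)$.) As written, your verification step would fail; with the coefficients corrected it goes through, and the rest of your outline (uniqueness from the order-$4$ annihilators plus the diagonal condition $E[X_{n,n}]=0$ and a handful of small cases) is sound and consistent with the paper's method.
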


Higher moments can be obtained automatically for each $u$. 
For example, if we have $R=\{1,-2\}$, the terms $E[X_{n,s}^2]$ 
satisfy the recurrence relation
\begin{align*}
(N-1)^3(pN+q)(p^2N^2-(p+1)qN+q^2)(pN^2-qN-q)^2 \cdot  E[X_{n,s}^2] &= 0, \\
(S-1)^3(pS^2-qS-q)^2 \cdot  E[X_{n,s}^2] &= 0.
\end{align*}

But, to find the grand pattern for general case of any value of $u$ as in 
\eqref{Pattern:a} and \eqref{Pattern:b} for the higher moments, 
we still need to finish this task by hands. 
Please see the output file for different values of $u$ and various higher moments in this case.

As discussed in Section \ref{Two}, the winning probability
for the first player $\bar{w}(n)$ does not seem to have a
pattern for the case $\{1,-1\}$. It is likely that the winning probability $\bar{w}(n)$ for the first player does not have any pattern for the general case of   
$\{1,-u\}$ also. 
Although the curious reader are welcome to explore this problem by our 
program with the function 
\texttt{ProbWin(n,R)} where $R= [1,-u]$
for some $u=1,2,3,\dots$.


\section{The case $R = \{2, -1 \}$ for one player} \label{section5}

In this section, we consider the set of choices $R=\{2,-1\}$ such that the probability of adding two chip to the pile is $p$; and the probability of removing one chip from the pile is $q=1-p$. We consider it as a pile game with boundary.

We first find some relations for the probability generating function in the single-player scenario. We have 
\[G_{n,s}(x) = \sum_{k=0}^{\infty} B_{n}(k,s)x^k\]
where $B_n(k,s)$ is the probability to end the game
at the $k^{\text{th}}$ turn while starting with $s$ chips. 

We expected that we get similar results to those in the
case of $R=\{1,-u\}$ by using essentially the same approach as in Section \ref{section4}. But it turns out that this case is
more complicated.

There are two possibilities to end the game: The player adds two chips to his pile of $n-2$ chips; or the player adds one chip to his pile of $n-1$ chips. 
Let the generating function for the first and second case 
be $P_{n,s}(x)$ and $Q_{n,s}(x)$ respectively. It is clear that 
\[  G_{n,s}(x)=P_{n,s}(x)+Q_{n,s}(x),  \;\  0 \leq s \leq n. \]

We find the expressions for $P_{n,s}(x)$ and $Q_{n,s}(x)$ inductively and hence obtain the expression for 
$G_{n,s}(x)$. 
 
By definition, we have $P_{n,n}(x)=1$, $Q_{n,n+1}(x)=1$,  
$P_{n,n+1}(x)=0$ and $Q_{n,n}(x)=0$. 
Then, by the recurrence relation similar to the relation \eqref{Ant}, for $n \geq 2$, we have
\begin{align*}
P_{n,n-1}(x) &= qxP_{n,n-2}(x) ,     \\
Q_{n,n-1}(x) &= qxQ_{n,n-2}(x)+px .   
\end{align*}
 
Based on the combinatorial relations similar to the relation (\ref{T1}), we also have  
\begin{align*}
P_{n,s}(x) &= P_{n,m}(x) \cdot P_{m,s}(x) +  P_{n,m+1}(x) \cdot Q_{m,s}(x), \;\ \;\  0 \leq s \leq m \leq n,\\
Q_{n,s}(x) &= Q_{n,m}(x) \cdot P_{m,s}(x) +  Q_{n,m+1}(x) \cdot Q_{m,s}(x), \;\ \;\  0 \leq s \leq m \leq n.
\end{align*} 
 
By setting $m=n-1,$ we obtain
\begin{align}
 P_{n,s}(x) &= P_{n,n-1}(x) \cdot P_{n-1,s}(x)+Q_{n-1,s}(x),    \;\ 0 \leq s \leq n-1,  \label{A1} \\
 Q_{n,s}(x) &= Q_{n,n-1}(x) \cdot P_{n-1,s}(x),  \;\ 0 \leq s \leq n-1.  \label{A2} 
\end{align} 
 
Next, we combine these recurrence relations 
with the combinatorial relations by setting $m=n-1$ and $s=n-2$. We obtain the following expressions:
\begin{align}
 Q_{n,n-1}(x) &= \dfrac{px}{1-qxP_{n-1,n-2}(x)},  \label{End1} \\
 P_{n,n-1}(x) &= \dfrac{qx \cdot Q_{n-1,n-2}(x)}{1-qxP_{n-1,n-2}(x)}
  = \dfrac{q}{p}Q_{n,n-1}(x) \cdot Q_{n-1,n-2}(x).  \label{End2}
\end{align}

With these four relations \eqref{A1},  \eqref{A2}, \eqref{End1}
and \eqref{End2}
 (the first two for $s<n-1$ and the last two for $s=n-1$)
along with initial conditions $P_{1,0}(x)=0$ and $Q_{1,0}(x)=\dfrac{px}{1-qx}$, 
we are able to generate the expressions for $P_{n,s}(x)$ and $Q_{n,s}(x)$ and hence the expressions for $G_{n,s}(x)$ recursively. 
Some examples are listed below:
\begin{align*}
G_{0,0}(x) &= 1, \\
G_{1,0}(x) &= \dfrac{px}{1-qx}, \;\ \;\ G_{1,1}(x) = 1,\\
G_{2,0}(x) &=  \dfrac{px}{1-qx}, \;\ \;\ G_{2,1}(x) =  \dfrac{px}{1-qx},
\;\ \;\ G_{2,2}(x) = 1, \\
G_{3,0}(x) &=  \dfrac{p^2x^2(1+qx)}{1-qx-pq^2x^3}, \;\ \;\ 
G_{3,1}(x) =  \dfrac{px(1-qx+pqx^2)}{1-qx-pq^2x^3}, \;\ \;\ 
G_{3,2}(x) = \dfrac{px(1-qx)(1+qx)}{1-qx-pq^2x^3}, \\
G_{3,3}(x) &= 1.
\end{align*}

We have the following statement about the denominator of $G_{n,s}(x)$.
\begin{cor} 
For any $n$ and $s$ such that $0\leq s \leq n$, given the set of choices $R = \{2, -1\}$,
let $G_{n,s}(x) = \dfrac{N_{n,s}(x)}{D_n(x)}$ such that the constant term of $D_n(x)$ is $1$. 
Then, the terms $D_n(x)$, as a sequence in $n$, satisfy the following recurrence relation:
\[  D_n(x) = D_{n-1}(x)-pq^2x^{3}D_{n-3}(x).\] 
\end{cor}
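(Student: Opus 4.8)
The plan is to derive the claimed recurrence for $D_n(x)$ from the recursive machinery already set up for $P_{n,s}(x)$ and $Q_{n,s}(x)$, in exactly the same spirit as the proof of Corollary \ref{Denom}. First I would extract, from the relations \eqref{A1}, \eqref{A2}, \eqref{End1}, \eqref{End2}, a self-contained recurrence for the single scalar sequence $e_n(x) := P_{n,n-1}(x)$ (equivalently for $Q_{n,n-1}(x)$, since \eqref{End2} links the two). Combining \eqref{End1} and \eqref{End2} at one level down gives $Q_{n,n-1} = px/(1 - qx P_{n-1,n-2})$ and $P_{n,n-1} = (q/p)Q_{n,n-1}Q_{n-1,n-2}$; iterating \eqref{A2} to rewrite $P_{n-1,n-2}$ in terms of quantities one more step down, one obtains a rational recurrence expressing $G_{n,0}(x)$ through $G_{n-1,0}(x)$ and $G_{n-3,0}(x)$ — the "$3$" appearing because ending the game by the $+2$ move reaches back to level $n-2$ while the chain of substitutions that eliminates the auxiliary $P,Q$ data costs one further level. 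Clearing denominators in that rational recurrence should produce a polynomial identity of the form $D_n(x) = D_{n-1}(x) - pq^2x^3 D_{n-3}(x)$ up to a scalar; the normalization "constant term of $D_n(x)$ is $1$" then pins down the scalar, since the right-hand side also has constant term $1$ by induction (as $D_0 = D_1 = 1$, $D_2 = 1-qx$ from the worked examples, all with constant term $1$).

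Concretely, the key steps in order are: (1) use the combinatorial relations with $m=n-1$, $s=n-2$ to get \eqref{End1}–\eqref{End2}, which are already in the excerpt; (2) substitute \eqref{A2} (one level down) into these to express $P_{n,n-1}$ purely in terms of $P_{n-2,n-3}$-type data, thereby getting a three-term rational recurrence for the sequence $P_{n,n-1}(x)$; (3) since \eqref{A1} and \eqref{A2} show that $P_{n,s}$ and $Q_{n,s}$ — and hence $G_{n,s} = P_{n,s}+Q_{n,s}$ — are built from $P_{n-1,s}$, $Q_{n-1,s}$ by multiplication by the scalars $P_{n,n-1}(x), Q_{n,n-1}(x)$, the common denominator $D_n(x)$ of $G_{n,s}(x)$ (for all $s$) is governed by the denominators of these scalars together with $D_{n-1}(x)$; (4) track how the denominators compound through the recursion and read off that $D_n = D_{n-1} - pq^2 x^3 D_{n-3}$, checking the base cases $n=0,1,2$ against the listed examples; (5) invoke induction with the normalization condition to fix the leading scalar. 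As in Corollary \ref{Denom}, one wants the fact (visible from the examples and provable by a parallel induction) that the numerator of $G_{n,0}(x)$ carries the "extra" factors of $px$, so that no spurious common factors appear between $N_{n,0}$ and $D_n$ and the recurrence for the denominators is clean.

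The main obstacle is step (4): unlike the $R=\{1,-1\}$ case where a single identity \eqref{Aek2} immediately exhibits $1/G_{n,s}$ as an $\mathbb{Z}[p,q,x]$-combination of $1/G_{n-1,s}$ and $1/G_{n-2,s}$, here the presence of \emph{two} coupled auxiliary sequences $P$ and $Q$ means the elimination is not a one-liner, and one must be careful that the substitutions do not introduce extra denominator factors that fail to cancel. I would handle this by first proving, by induction on $n$, the precise form of the pair $\bigl(P_{n,n-1}(x), Q_{n,n-1}(x)\bigr)$ as rational functions with denominator $D_{n-1}(x)$ (this is the natural inductive hypothesis suggested by \eqref{End1}), and only then deducing the denominator recurrence for the full family $G_{n,s}(x)$ via \eqref{A1}–\eqref{A2} and $G_{n,s} = P_{n,s}+Q_{n,s}$. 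Everything downstream is routine polynomial bookkeeping.
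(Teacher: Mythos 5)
Your plan follows the paper's own proof essentially step for step: the paper likewise writes $P_{n,s}=B_{n,s}(x)/D_n(x)$ and $Q_{n,s}=A_{n,s}(x)/D_n(x)$ with a common denominator, reads off $A_{n,n-1}(x)=C(x)\,pxD_{n-1}(x)$ and $D_n(x)=C(x)\left[D_{n-1}(x)-qxB_{n-1,n-2}(x)\right]$ from the relation $Q_{n,n-1}=px/(1-qxP_{n-1,n-2})$, feeds this into $P_{n,n-1}=\tfrac{q}{p}Q_{n,n-1}Q_{n-1,n-2}$ to get $B_{n,n-1}(x)=C(x)\,pqx^2D_{n-2}(x)$, and then fixes the undetermined scalar $C(x)=1$ by a normalization check at $n=4$, exactly the role your constant-term condition plays. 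One small correction to your proposed inductive hypothesis: $P_{n,n-1}$ and $Q_{n,n-1}$ have denominator $D_n(x)$, not $D_{n-1}(x)$ --- it is the \emph{numerator} of $Q_{n,n-1}$ that equals $pxD_{n-1}(x)$.
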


\begin{proof}
By solving the system of linear equations similar to the one in 
\eqref{Sys1:a}, \eqref{Sys1:b} and \eqref{Sys1:c}, it is true that the denominators of $Q_{n,s}(x)$, $P_{n,s}(x)$
and $G_{n,s}(x)$ are the same for each $n$. 

Let $Q_{n,s}(x) = \dfrac{A_{n,s}(x)}{D_n(x)}$
and $P_{n,s}(x) = \dfrac{B_{n,s}(x)}{D_n(x)}$. \\

We write the relation \eqref{End2} as
\[   \dfrac{A_{n,n-1}(x)}{D_n(x)} = \dfrac{pxD_{n-1}(x)}{D_{n-1}(x)-qxB_{n-1,n-2}(x)}. \]
Hence, we get the relations
\begin{align}
A_{n,n-1}(x) &= C(x)pxD_{n-1}(x), \label{End3} \\
D_n(x) &= C(x)[ D_{n-1}(x)-qxB_{n-1,n-2}(x) ], \label{End4}
\end{align}
for some function $C(x)$ which does not depend on $n$. 

We simplify the relation \eqref{End1} by the relation \eqref{End3}, we get
\[ B_{n,n-1}(x) = \dfrac{q}{p}\dfrac{A_{n,n-1}(x)}{D_{n-1}(x)}A_{n-1,n-2}(x) 
= C(x) \cdot pqx^2D_{n-2}(x).  \]

Finally, by substituting this identity to the relation \eqref{End4}, we get 
the following relations among the terms $D_{n}$:
\[  D_n(x) = C(x)[ D_{n-1}(x)-C(x)pq^2x^3D_{n-3}(x) ].\] 

We solve for $C(x)$ by using some fixed values of $n$. For example, we get $C(x)=1$ by setting $n=4$.
\end{proof}

Now we turns our attention to higher moments of $G_{n,s}(x)$. 
It is a little surprise that we cannot find any pattern not only for $G_{n,s}(x)$ but also for  
$P_{n,s}(x)$  and $Q_{n,s}(x)$  as well.
One may expect that we would get some nice results similar to those in Proposition \ref{Prop1},
Proposition \ref{Prop2}, Proposition \ref{Prop3} and Proposition \ref{Prop4} in Section \ref{TwoTwo}.
But it is also reasonable that it shows no pattern. 
At the end, the relations like
\eqref{A1}, \eqref{A2}, \eqref{End1}, \eqref{End2} are
much more complicated than, say, the relations \eqref{Aek2} and \eqref{Rela4}. 
We list some of these numbers in the output file for the readers who are interested in the computation we have done.

\section{Conclusion}
We consider pile games with boundary. We consider the probability generating functions of the number of turns required to end the game in both one-player scenario and two-player scenario for various set of choices $R$. For 
each $R$ considered, we obtain the probability generating function by solving 
the linear system of recurrence relations for the quantities involved. This process guarantees that the functions $G_{n,s}(x)$ 
and $W_{n,s_1,s_2}(x)$ are rational functions. 

For the set of choices $R=\{1,-u\}, \,u=1,2,\dots$, we manage to define 
the probability generating function $G_{n,s}(x)$ for one player recursively. As an application, other statistics can be calculated rapidly. In the two-player scenario, we conjecture that the winning probability $\bar{w}(n)$ for 
the first player shows no pattern, i.e., $\bar{w}(n)$ is not holonomic.
 
We are also interested in the case $R=\{u,-1\}, \,u=1,2,\dots$.
We manage to define the generating functions 
$G_{n,s}(x)$ recursively but we only did it explicitly for the case 
$\{2,-1\}$. It is feasible to obtain analogous results for other cases but we left it open for the readers as we are exhausted after all the computation we have done so far.

It is quite likely that many other formulas, recurrence relations and patterns can be obtained in ways similar to those used above for other pile games (with or without boundary). It is not our purpose to provide an exhaustive list, but to illustrate the method and in particular the usefulness of our approach. 

We would like to thank the computer program Maple that gave us most of
the patterns and formulas mentioned in this paper. It would be impossible to finish this project without the help
of our computer friend.



\begin{thebibliography}{9}

\bibitem{KP}  Manuel Kauers  and Peter Paule, {\it ``The Concrete Tetrahedron''}, Springer, 2011.

\bibitem{LT}  Ho-Hon Leung and Thotsaporn ``Aek'' Thanatipanonda, {\it A Probabilistic Two-Pile Game},
Journal of Integer Sequences, {\bf 22\#4} (2019). 


\bibitem{TZ} Thotsaporn ``Aek'' Thanatipanonda and Doron Zeilberger, 
{\em A Multi-Computational Exploration of Some Games of Pure Chance},
available at {\tt https://arxiv.org/abs/1909.11546} $\,\,$.

\bibitem{WX} Tony~W.~H. Wong and Jiao~Xu, A Probabilistic Take-Away Game, 
\emph{J. Integer Sequences} {\bf 21} (2018). 


\bibitem{Z1} Doron Zeilberger, {\it  The C-finite Ansatz}, Ramanujan J. {\bf 31}(2013), 23-32.
Available on-line: \hfill\break
{\tt http://www.math.rutgers.edu/\~{}zeilberg/mamarim/mamarimhtml/cfinite.html}

\end{thebibliography}
\end{document}